\newtheorem{thm}{Theorem}[section]
\newtheorem{prop}[thm]{Proposition}
\newtheorem{lem}[thm]{Lemma}
\newtheorem{cor}[thm]{Corollary}
\theoremstyle{defn}
\newtheorem{definition}[thm]{Definition}
\newtheorem{example}[thm]{Example}
\theoremstyle{remark}
\newtheorem{remark}[thm]{Remark}
\numberwithin{equation}{section}
\newcommand{\za}{\alpha}
\newcommand{\zb}{\beta}
\newcommand{\zd}{\delta}
\newcommand{\ze}{\epsilon}
\newcommand{\zg}{\gamma}
\newcommand{\zG}{\Gamma}
\newcommand{\zl}{\lambda}
\newcommand{\zs}{\sigma}
\newcommand{\zS}{\Sigma}
\newcommand{\torT}{\mathscr{T}(T)}
\newcommand{\ftorT}{\mathscr{F}(T)}
\newcommand{\calc}{\mathcal{C}}
\newcommand{\cald}{\mathcal{D}}
\newcommand{\cale}{\mathcal{E}}
\newcommand{\calj}{\mathcal{J}}
\newcommand{\cals}{\mathcal{S}}
\newcommand{\calt}{\mathcal{T}}
\newcommand{\calh}{\mathcal{H}}
\newcommand{\ZZ}{\mathbb{Z}}
\newcommand{\Hom}{\textup{Hom}}
\newcommand{\End}{\textup{End}}
\newcommand{\Ext}{\textup{Ext}}
\newcommand{\Ann}{\textup{Ann}}
\newcommand{\add}{\textup{add}}
\renewcommand{\DH}{\mathcal{D}^b(\mathcal{H})}
\newcommand{\DA}{\mathcal{D}^b(\textup{mod}\,A)}
\newcommand{\DC}{\mathcal{D}^b(\textup{mod}\,C)}
\newcommand{\E}{\Ext^2_C(DC,C)}
\newcommand{\CE}{C\ltimes \E}
\newcommand{\Ctilde}{\widetilde{C}}
\begin{document}
\title{Cluster-tilted and quasi-tilted algebras}
\author{Ibrahim Assem}
\address{D\'epartement de Math\'ematiques,
Universit\'e de Sherbrooke,
Sherbrooke, Qu\'ebec,
Canada J1K 2R1}
\email{ibrahim.assem@usherbrooke.ca}

\author{Ralf Schiffler}
\address{Department of Mathematics, University of Connecticut, 
Storrs, CT 06269-3009, USA}
\email{schiffler@math.uconn.edu}

\author{Khrystyna Serhiyenko}\thanks{The first author gratefully acknowledges partial support from the NSERC of Canada. The second author was supported by the NSF CAREER grant DMS-1254567 and by the University of Connecticut. The third author was  supported by the NSF Postdoctoral fellowship MSPRF-1502881.}
\address{Department of Mathematics, University of California, Berkeley, 
CA 94720-3840, USA}
\email{khrystyna.serhiyenko@berkeley.edu}

\begin{abstract}
In this paper, we prove that relation-extensions of quasi-tilted algebras are 2-Calabi-Yau tilted. With the objective of describing the module category of a cluster-tilted algebra of euclidean type, we define the notion of reflection so that any two local slices can be reached one from the other by a sequence of reflections and coreflections. We then give an algorithmic procedure for constructing the tubes of a cluster-tilted algebra of euclidean type. Our main result characterizes quasi-tilted algebras whose relation-extensions are cluster-tilted of euclidean type.
\end{abstract}

 \maketitle


\section{Introduction}
Cluster-tilted algebras were introduced by Buan, Marsh and Reiten \cite{BMR} and, independently in \cite{CCS} for type $\mathbb{A}$ as a byproduct of the now extensive theory of cluster algebras of Fomin and Zelevinsky \cite{FZ}. Since then, cluster-tilted algebras have been the subject of several investigations, see, for instance,
\cite{ABCP,ABS,BFPPT,BT, BOW,BMR2, KR,OS, SS,SS2}. 

In particular, in \cite{ABS} is given a construction procedure for cluster-tilted algebras: let $C$ be a triangular algebra of global dimension two over an algebraically closed field $k$, and consider the $C$-$C$-bimodule $\E$, where $D=\Hom_k(-,k)$ is the standard duality, with its natural left and right $C$-actions.
The trivial extension of $C$ by this bimodule is called the {\em relation-extension} $\Ctilde$ of $C$. It is shown there that, if $C$ is tilted, then its relation-extension is cluster-tilted, and every cluster-tilted algebra occurs in this way.

Our purpose in this paper is to study the relation-extensions of a wider class of triangular algebras of global dimension two, namely the class of quasi-tilted algebras, introduced by Happel, Reiten and Smal\o \  in \cite{HRS}. 
In general, the relation-extension of a quasi-tilted algebra is not cluster-tilted, however it is 2-Calabi-Yau tilted, see Theorem \ref{thm 2.1} below. We then look more closely at those cluster-tilted algebras which are tame and representation-infinite. According to \cite{BMR}, these coincide exactly with the cluster-tilted algebras of euclidean type. We ask then the following question: Given a cluster-tilted algebra $B$ of euclidean type, find all quasi-tilted algebras $C$ such that $B=\Ctilde$. A similar question has been asked (and answered) in \cite{ABS2}, where, however, $C$ was assumed to be tilted.

For this purpose, we generalize the notion of reflections of \cite{ABS4}. We prove that this operation allows to produce all tilted algebras $C$ such that $B=\Ctilde$, see Theorem \ref{thm mainreflection}. In \cite{ABS4} this result was shown only for cluster-tilted algebras of tree type.
We also prove that, unlike those of \cite{ABS4}, reflections in the sense of the present paper are always defined, that the reflection of a tilted algebra is also tilted of the same type, and that they have the same relation-extension, see Theorem \ref{thm reflection}  and Proposition \ref{prop reflection} below. Because all tilted algebras having a given cluster-tilted algebra as relation-extension are given by iterated reflections, this gives an algorithmic answer to our question above.

After that, we look at the tubes of a cluster-tilted algebra of euclidean type and give a procedure for constructing those tubes which contain a projective, see Proposition \ref{prop5}.

We then return to quasi-tilted algebras in our last section, namely we define a particular two-sided ideal of a cluster-tilted algebra, which we call the partition ideal. Our first result (Theorem \ref{thm ctaqt}) shows that the quasi-tilted algebras which are not tilted but have a given cluster-tilted algebra $B$ of euclidean type as relation-extension are the quotients of $B$ by a partition ideal.
We end the paper with the proof of our main result (Theorem \ref{thm ctaqt3}) which says that if $C$ is quasi-tilted and such that $B=\Ctilde$, then either $C$ is the quotient of $B$ by the annihilator of a local slice (and then $C$ is tilted) or it is the quotient of $B$ by a partition ideal (and then $C$ is not tilted except in two cases easy to characterize).

\section{Preliminaries}

\subsection{Notation}
Throughout this paper, algebras are basic and connected finite dimensional algebras over a fixed algebraically closed field $k$.  For an algebra $C$, we denote by $\text{mod}\,C$ the category of finitely generated right $C$-modules.  All subcategories are full, and identified with their object classes.  Given a category $\mathcal{C}$, we sometimes write $M\in\mathcal{C}$ to express that $M$ is an object in $\mathcal{C}$.  If $\mathcal{C}$ is a full subcategory of $\text{mod}\,C$, we denote by $\text{add}\,\mathcal{C}$ the full subcategory of $\text{mod}\,C$ having as objects the finite direct sums of summands of modules in $\mathcal{C}$.  

For a point $x$ in the ordinary quiver of a given algebra $C$, we denote by $P(x)$, $I(x)$, $S(x)$ respectively, the indecomposable projective, injective and simple $C$-modules corresponding to $x$.  We denote by $\Gamma(\text{mod}\,C)$ the Auslander-Reiten quiver of $C$ and by $\tau = D\text{Tr}, \tau^{-1} = \text{Tr} D$ the Auslander-Reiten translations.  For further definitions and facts, we refer the reader to \cite{ARS, ASS, S}.  

\subsection{Tilting}
Let $Q$ be a finite connected and acyclic quiver.  A module $T$ over the path algebra $kQ$ of $Q$ is called \emph{tilting} if $\text{Ext}^1_{kQ}(T,T)=0$ and the number of isoclasses (isomorphism classes) of indecomposable summands of $T$ equals $|Q_0|$, see \cite{ASS}.  An algebra $C$ is called \emph{tilted of type $Q$} if there exists a tilting $kQ$-module $T$ such that $C=\text{End}_{kQ} T$.  It is shown in \cite{Ri} that an algebra $C$ is tilted if and only if it contains a \emph{complete slice} $\Sigma$, that is, a finite set of indecomposable modules such that 
\begin{itemize}
\item[1)] $\bigoplus_{U\in \Sigma} U$ is a sincere $C$-module.  
\item[2)] If $U_0\to U_1 \to \dots \to U_t$ is a sequence of nonzero morphisms between indecomposable modules with $U_0,U_t\in\Sigma$ then $U_i\in\Sigma$ for all $i$ (\emph{convexity}).
\item[3)] If $0\to L \to M \to N \to 0$ is an almost split sequence in $\text{mod}\,C$ and at least one indecomposable summand of $M$ lies in $\Sigma$, then exactly one of $L,N$ belongs to $\Sigma$.  
\end{itemize}

For more on tilting and tilted algebras, we refer the reader to \cite{ASS}.  

Tilting can also be done within the framework of a hereditary category.  Let $\mathcal{H}$ be an abelian $k$-category which is Hom-finite, that is, such that, for all $X,Y\in\mathcal{H}$, the vector space $\text{Hom}_{\mathcal{H}}(X,Y)$ is finite dimensional.  We say that $\mathcal{H}$ is \emph{hereditary} if  $\text{Ext}^2_{\mathcal{H}}(-, ?)=0$.  An object $T\in\mathcal{H}$ is called a \emph{tilting object} if $\text{Ext}^1_{\mathcal{H}}(T,T)=0$ and the number of isoclasses of indecomposable objects of $T$ is  the rank of the Grothendieck group $K_0(\mathcal{H})$.  

The endomorphism algebras of tilting objects in hereditary categories are called \emph{quasi-tilted algebras}.  For instance, tilted algebras but also canonical algebras (see \cite{Ri}) are quasi-tilted.  Quasi-tilted algebras have attracted a lot of attention and played an important role in representation theory, see for instance \cite{HRS, Sk}.  

\subsection{Cluster-tilted algebras} 
Let $Q$ be a finite, connected and acyclic quiver.  The \emph{cluster category} $\mathcal{C}_Q$ of $Q$ is defined as follows, see \cite{BMRRT}.  Let $F$ denote the composition $\tau^{-1}_{\mathcal{D}}[1]$, where $\tau^{-1}_{\mathcal{D}}$ denotes the inverse Auslander-Reiten translation in the bounded derived category $\mathcal{D} = \mathcal{D}^b(\text{mod}\, kQ)$, and [1] denotes the shift of $\mathcal{D}$.  Then $\mathcal{C}_Q$ is the orbit category $\mathcal{D}/F$: its objects are the $F$-orbits $\widetilde{X}=(F^i X)_{i\in\mathbb{Z}}$ of the objects $X\in\mathcal{D}$, and the space of morphisms from $\widetilde{X}=(F^i X)_{i\in\mathbb{Z}}$ to $\widetilde{Y}=(F^i Y)_{i\in\mathbb{Z}}$ is 
$$\text{Hom}_{\mathcal{C}_Q}(\widetilde{X}, \widetilde{Y}) = \bigoplus_{i\in\mathbb{Z}} \text{Hom}_{\mathcal{D}}(X, F^i Y).$$
Then $\mathcal{C}_Q$ is a triangulated category with almost split triangles and, moreover, for $\widetilde{X}, \widetilde{Y}\in\mathcal{C}_Q$ we have a bifunctorial isomorphism $\text{Ext}^1_{\mathcal{C}_Q}(\widetilde{X}, \widetilde{Y})\cong D\text{Ext}^1_{\mathcal{C}_Q}(\widetilde{Y},\widetilde{X})$.  This is expressed by saying that the category $\mathcal{C}_Q$ is \emph{2-Calabi-Yau}.

An object $\widetilde{T}\in\mathcal{C}_Q$ is called \emph{tilting} if $\text{Ext}^1_{\mathcal{C}_Q}(\widetilde{T}, \widetilde{T})=0$ and the number of isoclasses of indecomposable summands of $\widetilde{T}$ equals $|Q_0|$.  The endomorphism algebra $B=\text{End}_{\mathcal{C}_Q} \widetilde{T}$ is then called \emph{cluster-tilted} of type $Q$.  
More generally, the endomorphism algebra $\End_\calc \widetilde{T}$ of a tilting object $\widetilde{T}$ in  a $2$-Calabi-Yau category with finite dimensional Hom-spaces is called a {\em 2-Calabi-Yau tilted algebra}, see \cite{Reiten}.

Let now $T$ be a tilting $kQ$-module, and $C=\text{End}_{kQ} T$ the corresponding tilted algebra.  Then it is shown in \cite{ABS} that the trivial extension $\widetilde{C}$ of $C$ by the $C$-$C$-bimodule $\text{Ext}^2_C (DC,C)$ with the two natural actions of $C$, the so-called \emph{relation-extension} of $C$, is cluster-tilted.  Conversely, if $B$ is cluster-tilted, then there exists a tilted algebra $C$ such that $B=\widetilde{C}$.  

Let now $B$ be a cluster-tilted algebra, then a full subquiver $\Sigma$ of $\Gamma(\text{mod}\,B)$ is a \emph{local slice}, see \cite{ABS2}, if: 
\begin{itemize}
\item[1)] $\Sigma$ is a \emph{presection}, that is, if $X\to Y$ is an arrow then: 
\begin{itemize}
\item[(a)] $X\in\Sigma$ implies that either $Y\in\Sigma$ or $\tau Y \in \Sigma$
\item[(b)] $Y\in\Sigma$ implies that either $X\in \Sigma$ or $\tau^{-1} X\in\Sigma$.
\end{itemize}
\item[2)] $\Sigma$ is \emph{sectionally convex}, that is, if $X=X_0\to X \to \dots \to X_t = Y$ is a sectional path in $\Gamma(\text{mod}\,B)$ then $X,Y\in\Sigma$ implies that $X_i\in\Sigma$ for all $i$.  
\item[3)] $|\Sigma_0| = \text{rk}\,K_0(B)$.  
\end{itemize}

Let $C$ be tilted, then, under the standard embedding $\text{mod}\,C \to \text{mod}\,\widetilde{C}$, any complete slice in the tilted algebra $C$ embeds as a local slice in $\text{mod}\,\widetilde{C}$, and any local slice in $\textup{mod}\,\Ctilde$ occurs in this way.  If $B$ is a cluster-tilted algebra, then a tilted algebra $C$ is such that $B=\widetilde{C}$ if and only if there exists a local slice $\Sigma$ in $\Gamma(\text{mod}\,B)$ such that $C=B/\text{Ann}_B \Sigma$, where $\text{Ann}_B \Sigma = \bigcap_{X\in\Sigma} \text{Ann}_B X$, see \cite{ABS2}.

Let $\Sigma$ be a local slice in the transjective component of $\Gamma(\text{mod}\,B)$ having the property that all the sources in $\Sigma$ are injective $B$-modules.  Then $\Sigma$ is called a \emph{rightmost} slice of $B$.  Let $x$ be a point in the quiver of $B$ such that $I(x)$ is an injective source of the rightmost slice $\Sigma$.  Then $x$ is called a \emph{strong sink}.  \emph{Leftmost slices} and \emph{strong sources} are defined dually.

\section{From quasi-tilted to cluster-tilted algebras}

We start with a motivating example. Let $C$ be the tilted algebra of type $\widetilde{\mathbb{A}}$ given by the quiver
\[\xymatrix@R5pt@C60pt {&2\ar[ld]_\zb\\1&&4\ar[lu]_\za\ar[ld]^\zg\\&3\ar[lu]^\zd}\]
bound by $\za\zb=0$, $\zg\zd=0$. Its relation-extension is the cluster-tilted algebra $B$ given by the quiver 
\[\xymatrix@R25pt@C60pt {&2\ar[ld]_\zb\\1\ar@<1.5pt>[rr]^\zl\ar@<-1.5pt>[rr]_\mu&&4\ar[lu]_\za\ar[ld]^\zg\\&3\ar[lu]^\zd}\]
bound by $\za\zb=0$, $\zb\zl=0$, $\zl\za=0$, $\zg\zd=0$, $\zd\mu=0$, $\mu\zg=0$. 
However, $B$ is also the relation-extension of the algebra $C'$ given by the quiver
\[\xymatrix@R=30pt@C60pt {2&4\ar[l]_\za&1\ar@<1.5pt>[l]_{ ^\zl }\ar@<-1.5pt>[l]^{\ \atop\mu}&3\ar[l]_\zd}\]
bound by $\zl\za=0$, $\zd\mu=0$.
This latter algebra $C'$ is not tilted, but it is quasi-tilted. In	 particular, it is triangular of global dimension two. Therefore, the question arises natrually whether the relation-extension of a quasi-tilted algebra is always cluster-tilted. This is certainly not true in general, for the relation-extension of a tubular algebra is not cluster-tilted. However, it is 2-Calabi-Yau tilted. In this section, we prove that the relation-extension
of  a quasi-tilted algebra is always 2-Calabi-Yau tilted.

Let $\calh$ be a hereditary category with tilting object $T$. Because of \cite{Happel}, there exist an algebra $A$, which is hereditary or canonical, and a triangle equivalence $\Phi:\DH\to\DA$.
Let $T'$ denote the image of $T$ under this equivalence. Because $\Phi$ preserves the shift and the Auslander-Reiten translation, it induces an  equivalence between the cluster categories $\calc_\calh$ and $\calc_A$, see \cite[Section 4.1]{Amiot}. Indeed, because $A$ is canonical or hereditary, it follows that $\calc_A\cong\DA/F$, where $F=\tau^{-1}[1].$
Therefore, we have $\End_{\calc_\calh}T\cong\End_{\calc_A} T'$.

We say that a 2-Calabi-Yau tilted algebra $\End_\calc T$ is of \emph{canonical type} if the 2-Calabi-Yau category $\calc$ is the cluster category of a canonical algebra.
The proof of the next theorem follows closely \cite{ABS}.

\begin{thm}
 \label{thm 2.1}
 Let $C$ be a quasi-tilted algebra. Then its relation-extension $\widetilde{C}$ is cluster-tilted or it is 2-Calabi-Yau titled of canonical type.
\end{thm}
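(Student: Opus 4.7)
The plan is to mimic the proof of the analogous statement for tilted algebras given in \cite{ABS}, using Happel's theorem to replace the given hereditary category by the derived category of a hereditary or canonical algebra. Since $C$ is quasi-tilted, write $C = \End_\calh(T)$ for a tilting object $T$ in a hereditary abelian $k$-category $\calh$. By Happel's theorem and the setup recorded in the paragraph preceding the statement, there is a triangle equivalence $\Phi : \DH \to \DA$ with $A$ hereditary or canonical, and $T' := \Phi(T)$ is a tilting object of $\DA$ with $\End_\DA(T') \cong C$. That same paragraph already yields the identification $\End_{\calc_\calh}(T) \cong \End_{\calc_A}(T')$, so it suffices to prove that this endomorphism algebra is isomorphic to $\Ctilde$.

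The second step is to compute $\End_{\calc_A}(T')$ from the orbit-category formula
$$\End_{\calc_A}(T') \;=\; \bigoplus_{i \in \ZZ} \Hom_\DA(T', F^i T'),$$
where $F = \tau^{-1}[1]$. Since $T'$ is tilting, $\Ext^j_\DA(T',T') = 0$ for $j \neq 0$, and combining this with the $2$-Calabi-Yau property forces $\Hom_\DA(T', F^i T') = 0$ for $i \neq 0,1$. The term $i = 0$ contributes $\End_\DA(T') = C$, so the problem reduces to identifying the remaining summand $\Hom_\DA(T', \tau^{-1}[1]T')$ with $\E$ as a $C$-$C$-bimodule, and then matching the induced multiplication with the trivial-extension multiplication on $\CE = \Ctilde$.

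The third step is the bimodule identification. The crucial input is that, being quasi-tilted, $C$ has global dimension at most two (by Happel-Reiten-Smal\o), so $\E$ is a natural $C$-$C$-bimodule measuring the failure of $C$ to be hereditary. One then adapts the computation of \cite{ABS}: rewriting $\tau^{-1}[1]$ via the Nakayama functor and applying Serre duality in $\DA$ together with the derived equivalence induced by $T'$ yields
$$\Hom_\DA(T', \tau^{-1}[1]T') \;\cong\; \E$$
as $C$-$C$-bimodules, and a direct check on the Yoneda product gives compatibility with the trivial-extension structure, so $\End_{\calc_A}(T') \cong \Ctilde$.

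The main obstacle I foresee is verifying that the ABS computation, originally carried out for $A = kQ$ hereditary, still functions when $A$ is only canonical, since the argument uses derived-categorical manipulations that were traditionally presented in the hereditary setting. The critical ingredient used in \cite{ABS} is $\textup{gl.dim}\, C \leq 2$, and this remains valid in the quasi-tilted case; the rest of the manipulations are intrinsic to $\DA$ and the equivalence $\Phi$. Once the isomorphism $\End_{\calc_A}(T') \cong \Ctilde$ is established, the conclusion follows: $\Ctilde$ is the endomorphism algebra of a tilting object in the $2$-Calabi-Yau category $\calc_A$, hence $2$-Calabi-Yau tilted; it is cluster-tilted exactly when $A$ is hereditary, and $2$-Calabi-Yau tilted of canonical type when $A$ is canonical.
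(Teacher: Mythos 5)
Your proposal follows essentially the same route as the paper: invoke Happel's theorem to replace $\calh$ by the derived category of a hereditary or canonical algebra, identify the additive structure of $\End_{\calc_\calh}T$ with $C\oplus\E$ by transporting $\E\cong\Hom_{\DC}(\tau C[1],C[2])$ through the derived equivalence to $\Hom_{\DH}(T,FT)$, and defer the check of the multiplicative structure to the argument of \cite{ABS}. The only cosmetic difference is the order of the steps (you first establish the vanishing of $\Hom_{\DA}(T',F^iT')$ for $i\neq 0,1$ and then identify the $i=1$ summand, while the paper writes the chain of isomorphisms in the other direction), so the two proofs are in substance identical.
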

\begin{proof}
 Because $C$ is quasi-tilted, there exist a hereditary category $\calh$ and a tilting object $T$ in $\calh$ such that $C=\End_\calh T$. As observed above, there exist an algebra $A$, which is hereditary or canonical, and a triangle equivalence $\Phi:\DH\to\DA$. Let $T'=\Phi(T)$.We have
 $\DC\cong\DA\cong\DH$, and therefore
 \[
\begin{array}
 {rcl}
 \E&\cong & \Hom_{\DC}(\tau C[1] , C[2]) \\
 &\cong & \Hom_{\DH}(\tau T[1] , T[2]) \\
 &\cong & \Hom_{\DH}( T , \tau^{-1} T[1]) \\
 &\cong & \Hom_{\DH}( T , F T) .\\
\end{array}
 \]
 Thus the additive structure of $\CE$ is that of 
  \[
\begin{array}
 {rcl}
 C\oplus \E&\cong & \End_{\calh}(T)\oplus \Hom_{\DH}(T,FT)\\
&\cong & \oplus_{i\in\ZZ}\Hom_{\DH}(T,FT)\\
&\cong & \Hom_{\calc_\calh}(T,T)\\
&\cong &\End_{\calc_\calh} T.
\end{array}
 \]
Then, we check exactly as in \cite[Section 3.3]{ABS} that the multiplicative structure is preserved. This completes the proof.
\end{proof}

Let $C$ be a representation-infinite quasi-tilted algebra. Then $C$ is derived equivalent to a hereditary or a  canonical algebra $A$. Let $n_A$ denote the tubular type of $A$. We then say that $C$ has canonical type $n_C=n_A$.

\begin{lem}\label{lem 1}
 Let $C$ be a representation-infinite  quasi-tilted. Then its relation-extension $\Ctilde$  is cluster-tilted of euclidean type if and only if $n_C$ is one of 
 \[(p,q),(2,2,r),(2,3,3),(2,3,4),(2,3,5), \textup{ with $p\le q$, $2\le r.$}\]
 \end{lem}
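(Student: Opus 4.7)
The plan is to apply Theorem \ref{thm 2.1} in order to reduce the statement to one about the tubular invariant $n_C$ of the underlying (hereditary or canonical) algebra $A$, and then to exploit the fact that the listed tuples are exactly the tubular types of the indecomposable hereditary algebras of euclidean type, namely $\widetilde{\mathbb{A}}_{p,q}$, $\widetilde{\mathbb{D}}_{r+2}$, $\widetilde{\mathbb{E}}_6$, $\widetilde{\mathbb{E}}_7$ and $\widetilde{\mathbb{E}}_8$. By Theorem \ref{thm 2.1} and the discussion preceding it, $\Ctilde\cong\End_{\calc_A}T'$ for some algebra $A$ that is hereditary or canonical and derived equivalent to $C$; in particular $n_C=n_A$.

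For the sufficiency, suppose $n_C$ lies in the list. If Happel's theorem returns an $A$ that is already hereditary, then its tubular type being in the list forces $A\cong k\widetilde{\Delta}$ for the corresponding euclidean quiver $\widetilde{\Delta}$, so $\calc_A=\calc_{\widetilde{\Delta}}$ and $\Ctilde$ is cluster-tilted of euclidean type by definition. If instead $A$ is canonical, then every tuple in the list is a \emph{domestic} tubular type, and the classical Geigle--Lenzing theorem supplies a triangle equivalence $\mathcal{D}^b(\text{mod}\,A)\cong\mathcal{D}^b(\text{mod}\,k\widetilde{\Delta})$; passing to orbits under $F=\tau^{-1}[1]$ gives $\calc_A\cong\calc_{\widetilde{\Delta}}$, and the same conclusion holds.

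For the necessity, assume $\Ctilde$ is cluster-tilted of euclidean type $\widetilde{\Delta}$, so that $\Ctilde\cong\End_{\calc_{\widetilde{\Delta}}}\widetilde{T}$ for some tilting object $\widetilde{T}$. The goal is to deduce $n_A=n_{\widetilde{\Delta}}$, which lies in the list; combined with $n_C=n_A$ this finishes the lemma. This identification is automatic once one knows that two 2-Calabi-Yau categories containing tilting objects with isomorphic endomorphism algebras must share the same tubular type. The relevant invariant can be extracted from the Auslander-Reiten quiver of $\text{mod}\,\Ctilde$: the tubes of the ambient cluster category that miss the cluster-tilting object descend, via $\Hom_{\calc}(\widetilde{T},-)$, to tubes in $\text{mod}\,\Ctilde$ of the same rank, so the multiset of exceptional tube ranks is an invariant of $\Ctilde$ itself.

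The main obstacle is precisely this invariance step in the necessity direction, i.e.\ ruling out that $\Ctilde$ could be realized simultaneously as a cluster-tilted algebra of euclidean type and as a 2-Calabi-Yau tilted algebra of a non-domestic canonical type (either tubular or wild-canonical). Once this is settled the lemma follows immediately, since the listed tuples are exhaustive for the tubular types of euclidean Dynkin diagrams while the tubular and wild-canonical types lie outside the list.
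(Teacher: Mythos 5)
Your reduction and your sufficiency argument follow the same route as the paper: by (the proof of) Theorem~\ref{thm 2.1}, $\Ctilde\cong\End_{\calc_A}T'$ for an algebra $A$ that is hereditary or canonical and derived equivalent to $C$, so that $n_C=n_A$ by definition; the listed tuples are exactly the domestic tubular types; and a hereditary or canonical algebra of domestic tubular type is derived equivalent to the euclidean hereditary algebra of the same type, whence $\calc_A$ is a euclidean cluster category and $\Ctilde$ is cluster-tilted of euclidean type. The paper compresses all of this into the single assertion that ``$\Ctilde$ is cluster-tilted of euclidean type if and only if $C$ is derived equivalent to a tilted algebra of euclidean type, and this is the case if and only if $n_C$ belongs to the above list,'' which is precisely your chain of equivalences.

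The genuine gap is the one you flag yourself: the necessity direction is not proved. You must rule out that $\End_{\calc_A}T'$ is cluster-tilted of euclidean type while $n_A$ is tubular or wild, and the tube-rank invariance you sketch does not do this as stated. First, a tube of $\calc_A$ containing summands of $T'$ need not descend to a tube of the same rank under $\Hom_{\calc_A}(T',-)$ (already at the level of tilted algebras, ray and coray insertions change the tubular type: a tilted algebra of type $\widetilde{\mathbb{E}}_8$ can arise as a tubular extension of the Kronecker algebra, whose own tubes are all homogeneous), so the multiset of exceptional tube ranks of $\textup{mod}\,\Ctilde$ does not directly read off $n_A$. Second, in the tubular case the obstruction is not the ranks of finitely many tubes but the presence of infinitely many separating tubular families. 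The step is closed by a representation-type argument, which is what the paper's one-line proof implicitly invokes: a cluster-tilted algebra of euclidean type is tame domestic (indeed, by \cite{BMR} the euclidean ones are exactly the tame representation-infinite cluster-tilted algebras), whereas if $n_A$ is tubular then $\textup{mod}\,\End_{\calc_A}T'$ inherits infinitely many tubular families from $\calc_A$ and is tame non-domestic, and if $n_A$ is wild then $\End_{\calc_A}T'$ is wild. These three possibilities being mutually exclusive, $\Ctilde$ cluster-tilted of euclidean type forces $n_C=n_A$ to be domestic, i.e.\ in the list. With this one step supplied, your argument coincides with the paper's.
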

 \begin{proof}
 Indeed, $\Ctilde$ is cluster-tilted of euclidean type if and only if $C$ is derived equivalent to a tilted algebra of euclidean type, and this is the case if and only if $n_C$ belongs to the above list.
\end{proof}

\begin{remark}
 \label{rem 2} It is possible that $C$ is domestic, but yet $\Ctilde$ is wild. Indeed, we modify the example after Corollary D in \cite{Sk}. 
 Recall from \cite{Sk} that there exists a tame concealed full convex subcategory $K$ such that $C$ is a semiregular branch enlargement of $K$ 
 \[C=[E_i]K[F_j],\]
 where $E_i, F_j$ are (truncated) branches. Then the representation theory of $C$ is determined by those of $C^-=[E_i]K$ and $C^+=K[F_j]$. 
 Let $C$ be given by the quiver
 
\[\xymatrix@R20pt@C40pt{
1&&&&6\ar[dl]_\zd&11\ar[l]_\zeta\\
&3\ar[ul]_\za\ar[dl]^\zb&4\ar[l]_\zg\ar[d]^\nu&5\ar[l]_\zs \\
2&&8\ar[d]^\varphi &9\ar[l]_\omega&7\ar[ul]_\rho
\\&&10
}
\]
bound by the relations $\zs\nu=0$, $\omega\varphi=0$, $\zeta\zd\zs\zg\zb=0$. Here $C^-$ is the full subcategory generated by $C_0\setminus\{11\}$ and $C^+$ the one generated by $C_0\setminus\{8,9,10\}$.
Then $C^-$ has domestic tubular type $(2,2,7)$ and $C^+$ has domestic tubular type $(2,3,4)$. Therefore $C$ is domestic. On the other hand, the canonical type of $C$ is $(2,3,7)$, which is wild. In this example, the 2-Calabi-Yau tilted algebra $\Ctilde$ is not cluster-tilted, because it is not of euclidean type, but the derived category of $\textup{mod}\,C$ contains tubes, see \cite{Ringel Durham}.
\end{remark}

\begin{remark}
 There clearly exist algebras which are not quasi-tilted but whose relation-extension is cluster-tilted of euclidean type.  Indeed, let $C$ be given by the quiver
 \[\xymatrix@C40pt{6\ar[r]^\za&5\ar[r]^\zb&4\ar[r]^\zg&3\ar[r]^\zd&2\ar@<-2pt>[r]_\mu\ar@<2pt>[r]^\zl&1}
 \]
 bound by $\za\zb=0,\zd\zl=0$. Then $C$ is iterated tilted of type $\widetilde{\mathbb{A}}$ of global dimension 2, see \cite{FPT}. Its relation-extension is given by
 \[\xymatrix@C40pt{6\ar[r]^\za&5\ar[r]^\zb&4\ar@/_25pt/[ll]_\zs\ar[r]^\zg&3\ar[r]^\zd&2\ar@<-2pt>[r]_\mu\ar@<2pt>[r]^\zl&1\ar@/_25pt/[ll]_\eta}
 \]
 bound by $\za\zb=0,\zb\zs=0,\zs\za=0,\zd\zl=0,\zl\eta=0,\eta\zd=0$. 
This algebra is isomorphic to the relation-extension of the tilted algebra of type $\widetilde{\mathbb{A}}$ given by the quiver
\[\xymatrix@R20pt@C40pt{6\\
&4\ar[lu]^\zs \ar[r]^\zg&3\ar[r]^\zd&2\ar@<-2pt>[r]_\mu\ar@<2pt>[r]^\zl&1\\
5\ar[ru]^\zb}
\]
bound by $\zb\zs=0$, $\zd\zl=0$. Therefore $\Ctilde$ is cluster-tilted of euclidean type. On the other hand, $C$ is not quasi-tilted, because the uniserial module $\begin{smallmatrix}4\\3\end{smallmatrix}$ has both projective and injective dimension 2. 
\end{remark}


\section{Reflections}
Let $C$ be a tilted algebra. Let $\zS$ be a rightmost slice, and let $I(x)$ be an injective source of $\zS$. Thus $x$ is a strong sink in $C$. 
\begin{definition} We define
 the \emph{completion $H_x$ of $x$} by the following three conditions.
\begin{itemize}
\item [(a)] $I(x)\in H_x$.
\item [(b)] $H_x$ is closed under predecessors in $\zS$.
\item [(c)] If $L\to M$ is an arrow in $\zS$ with $L\in H_x$ having an injective successor in $H_x$ then $M\in H_x$.
\end{itemize}
\end{definition}

Observe that $H_x$ may be constructed inductively in the following way. We let $H_1=I(x)$, and $H_2'$ be the closure of $H_1$ with respect to (c) (that is, we simply add the direct successors of $I(x)$ in $\zS$, and 
if a direct successor of $I(x)$ is injective, we also take its direct successor, etc.) We then let $H_2$ be the closure of $H_2'$ with respect to predecessors in $\zS$. Then we repeat the procedure; given $H_i$, we let $H_{i+1}'$ be the closure of $H_i$ with respect to (c) and $H_{i+1}$ be the closure of $H_{i+1}'$ with respect to predecessors. This procedure must stabilize, because the slice $\zS$ is finite. If $H_j=H_k$ with $k>j$
, we let $H_x=H_j$.

We can decompose $H_x$ as the disjoint union of three sets as follows. Let $\calj$ denote the set of injectives in $H_x$, let $\calj^-$ be the set of non-injectives in $H_x$ which have an injective successor in $H_x$, and let $\cale=H_x\setminus(\calj\cup\calj^-)$ denote the complement of $(\calj\cup\calj^-)$ in $H_x$. Thus
\[H_x=\calj\sqcup\calj^-\sqcup\cale\]
is a disjoint union.
\begin{remark}\label{rem H}
If $\calj^-=\emptyset$ then $H_x$ reduces to the completion $G_x$ as defined in \cite{ABS4}. Recall that $G_x$ does not always exist, but, as seen above, $H_x$ does.
Conversely, if $G_x$ exists, then it follows from its construction in \cite{ABS4} that $\calj^-=\emptyset$. 

Thus $\calj^-=\emptyset$ if and only if $G_x$ exists, and, in this case $G_x=H_x$.
\end{remark}

For every module $M$ over a cluster-tilted algebra $B$, we can consider a lift $\widetilde M$ in the cluster category $\calc$. Abusing notation, we   sometimes write $\tau^i M$ to denote the image of $\tau^i_\calc \widetilde M$ in $\textup{mod}\,B$, and say that the Auslander-Reiten translation is computed in the cluster category.

\begin{definition}
 Let $x$ be a strong sink in $C$ and let $\zS$ be a rightmost local slice with injective source $I(x)$. Recall that $\zS$ is also a local slice in $\textup{mod}\,B$. Then the reflection of the slice $\zS$ in $x$ is 
 \[\zs_x^+\zS=\tau^{-2}(\calj\cup\calj^-)\cup\tau^{-1}\cale\cup(\zS\setminus H_x),\]
 where $\tau$ is computed in the cluster category. 
 In a similar way, one defines the coreflection $\zs^-_y$ of leftmost slices with projective sink $P_C(y)$.
\end{definition}

\begin{thm}\label{thm reflection}
 Let $x$ be a strong sink in $C$ and let $\zS$ be a rightmost local slice in $\textup{mod}\,B$  with injective source $I(x)$. Then the reflection $\zs_x^+\zS$ is a local slice as well.
\end{thm}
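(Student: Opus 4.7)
The plan is to verify directly the three defining properties of a local slice for $\zs_x^+\zS$: presection, sectional convexity, and the correct cardinality $|\zs_x^+\zS|=\textup{rk}\,K_0(B)$, working systematically inside the cluster category $\calc$ where the translates $\tau^{-1}$ and $\tau^{-2}$ are always defined and then projecting back to $\textup{mod}\,B$.

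For the cardinality, the reflection is given as a union
\[\zs_x^+\zS = \tau^{-2}(\calj\cup\calj^-)\cup\tau^{-1}\cale\cup(\zS\setminus H_x).\]
Since $H_x\subseteq\zS$ and distinct lifts of elements of a local slice are pairwise $\tau$-inequivalent in the transjective component of $\calc$, the three pieces are disjoint and $|\zs_x^+\zS| = |H_x| + |\zS\setminus H_x| = |\zS| = \textup{rk}\, K_0(B)$.

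For the presection property, I would case-split arrows $X\to Y$ in $\Gamma(\textup{mod}\,B)$ with one endpoint in $\zs_x^+\zS$ according to which of the three pieces the endpoints belong to. Lifting to $\calc$ and using that $\tau_\calc$ transports arrows to arrows, each such arrow corresponds to an arrow between suitable translates of elements of $\zS$. The presection property of $\zS$ itself then handles arrows internal to a single piece. For arrows crossing between pieces, the closure conditions defining $H_x$ provide exactly the right bookkeeping: predecessor-closure of $H_x$ in $\zS$ controls the arrows that cross from $\zS\setminus H_x$ into (translates of) $H_x$, while condition (c) on successors of elements with injective successors controls the transition between the $\tau^{-1}$-shifted piece $\cale$ and the $\tau^{-2}$-shifted piece $\calj\cup\calj^-$. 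Sectional convexity is then established by a parallel lifting argument: a sectional path in $\Gamma(\textup{mod}\,B)$ with endpoints in $\zs_x^+\zS$ lifts to a sectional path in $\calc$, applying the appropriate inverse translations produces a sectional path with endpoints in $\zS$, sectional convexity of $\zS$ places intermediate terms in $\zS$, and re-translating forces intermediate terms of the original path into $\zs_x^+\zS$.

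The main obstacle will be controlling the boundary between $\tau^{-1}\cale$ and $\tau^{-2}(\calj\cup\calj^-)$, where the two pieces undergo different $\tau$-shifts. This is precisely the role of the set $\calj^-$, which generalizes the construction of $G_x$ in \cite{ABS4} (cf.\ Remark \ref{rem H}) to situations where the completion contains more than one injective: $\calj^-$ absorbs the extra $\tau$-shift produced when $\tau^{-1}_\calc$ is applied to an injective (so that arrows between $\calj$ and $\calj^-$ lift consistently to arrows in $\calc$ under a common $\tau^{-2}$-translation), while the cut at the boundary with $\cale$ is arranged so that the mismatch of shifts is absorbed by one AR-triangle. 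A careful induction on the stages $H_1\subseteq H_2\subseteq\cdots$ of the inductive construction of $H_x$, verifying that arrows added at each stage behave well under the prescribed translations, seems the cleanest organizational tool for this boundary analysis.
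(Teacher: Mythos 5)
Your overall strategy---work in the cluster category, where the translations are always defined, and verify the local-slice axioms directly---is workable in principle, but it has a genuine gap that your outline never addresses: you do not check that $\zs_x^+\zS$ is disjoint from $\add(\tau T)$ in $\calc$. The objects of $\add(\tau T)=\add(T[1])$ are exactly those killed by $\Hom_\calc(T,-)$, so if any object of $\tau^{-2}(\calj\cup\calj^-)\cup\tau^{-1}\cale$ landed in $\add(\tau T)$ it would correspond to the zero $B$-module; your cardinality count $|\zs_x^+\zS|=|\zS|=\textup{rk}\,K_0(B)$ would then fail in $\textup{mod}\,B$, and the image could not be a local slice. This is not a formality: roughly half of the paper's proof is devoted to precisely this verification, and the case $X\in\tau^{-2}\calj^-$ needs a real argument---an element of $\calj^-$ has an injective predecessor $I(j)$ in $H_x$ joined to it by a sectional path inside the slice, and applying $\tau^{-2}$ produces a sectional path from $T_j$ to $X$, whence $\Hom_\calc(T_j,X)\neq 0$, contradicting $X\in\add(\tau T)$. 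The remaining cases use that $\cale$ contains no injectives and that $\tau^{-2}$ of an injective is a summand of $T$.

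Beyond that, your treatment of presection and sectional convexity is a plan rather than a proof, and it is exactly at the point you flag as ``the main obstacle'' that the content lies. The paper sidesteps the arrow-by-arrow case analysis entirely by performing the reflection in two rounds of APR-tilts inside $\calc$: first it forms the intermediate set $\zS''=\tau^{-1}H_x\cup(\zS\setminus H_x)$, which is a local slice because $H_x$ is closed under predecessors in $\zS$ (so every boundary arrow points out of $H_x$, making the elements of $H_x$ tiltable sources), and then it tilts a second time at $\tau^{-1}(\calj\cup\calj^-)$. The single nontrivial lemma required for the second round---and the precise statement hiding behind your remark that the closure conditions ``provide exactly the right bookkeeping''---is that $\tau^{-1}(\calj\cup\calj^-)$ is closed under predecessors in $\zS''$, which one deduces from condition (c) in the definition of $H_x$. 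If you want to keep your direct verification, you should isolate and prove that closure statement explicitly; as written it is asserted, not established, and together with the missing $\add(\tau T)$ check it leaves the proposal incomplete.
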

\begin{proof}
 Set $\zS'=\zs_x^+\zS$ and 
 \[\zS''=\tau^{-1}(\calj\cup\calj^-)\cup\tau^{-1}\cale\cup(\zS\setminus H_x)=\tau^{-1}H_x\cup(\zS\setminus H_x),\]
where again, $\zS''$ and $\tau$ are computed in the cluster category $\calc$. We claim that  $\zS''$ is a local slice in $\calc$. Notice  that since $H_x$ is closed under predecessors in $\zS$, then, if $X\in\zS\setminus H_x$ is a neighbor of $Y\in H_x$, we must have an arrow $Y\to X$ in $\zS$. This observation being made, $\zS''$ is clearly obtained from $\zS$ by applying a sequence of APR-tilts. Thus $\zS''$ is a local slice in $\calc$.

We now claim that $\tau^{-1}(\calj\cup\calj^-)$ is closed under predecessors in $\zS''$. Indeed, let $X\in\tau^{-1}(\calj\cup\calj^-)$ and $Y\in \zS''$ be such that we have an arrow $Y\to X$. Then, there exists an arrow $\tau X\to Y$ in the cluster category. Because $X\in \tau^{-1}(\calj\cup\calj^-)$, we have $\tau X\in \calj\cup\calj^-$.
Now if $Y\in \zS$, then the arrow $\tau X\to Y$ would imply that $Y\in H_x$, which is impossible, because $Y\in \zS''$ and $\zS''\cap H_x=\emptyset$.
 Thus $Y\notin \zS$, and therefore $Y\in( \zS''\setminus\zS)=\tau^{-1}H_x$.
Hence $\tau Y\in H_x$.
Moreover, there is an arrow $\tau Y\to \tau X$. Using that $\tau X\in \calj\cup\calj^-$, this implies that  $\tau Y$ has an injective successor in $H_x$ and thus $Y\in \tau^{-1}(\calj\cup\calj^-)$. This establishes our claim that $\tau^{-1}(\calj\cup\calj^-)$ is closed under predecessors in $\zS''$.

Thus applying the same reasoning as before, we get that
\[\zS'=(\zS''\setminus \tau^{-1}(\calj\cup\calj^-))\cup\tau^{-2}(\calj\cup\calj^-)
\]
is a local slice in $\calc$. Now  we claim that \[\zS'\cap \add(\tau T)=\emptyset.\]
First, because $\zS\cap \add(\tau T)=\emptyset$, we have $(\zS\setminus H_x)\cap \add(\tau T) =\emptyset$.
Next, $\cale$ contains no injectives, by definition. Thus $\tau^{-1}\cale\cap \add(\tau T)=\emptyset.$ Assume now that $X\in \add(\tau T)$ belongs to $\tau^{-2}\calj^-$. Then $\tau^2 X\in H_x$ and there exists an injective predecessor $I(j)$ of $\tau^2 X$ in $H_x$, and since $H_x$ is part of the local slice $\zS$, there exists a sectional path from $I(j)$ to $\tau^2 X$. Applying $\tau^{-2}$, we get a sectional path from $T_j$  to $X$ in the cluster category. But this means $\Hom_\calc(T_j,X)\ne 0$, which is a contradiction to the hypothesis that $X\in \add(\tau T)$.
Finally, if $X\in \tau^{-2}\calj$ then $X$ is a summand of $T$, which, again, is contradicting the hypothesis that $X\in \add(\tau T)$.
\end{proof}

\smallskip
Following \cite{ABS4}, let $\cals_x$ be the full subcategory of $C$ consisting of those $y$ such that $I(y)\in H_x$.
\begin{lem}
 \label{lem S}
 \begin{itemize}
\item [\textup{(a)}] $\cals_x$ is hereditary.
\item [\textup{(b)}] $\cals_x$ is closed under successors in $C$.
\item [\textup{(c)}] $C$ can be written in the form
\[ C= \left[
\begin{array}
 {cc} H&0\\M&C'
\end{array}\right] ,\]
where $H$ is hereditary, $C'$ is tilted and $M$ is a $C'$-$H$-bimodule.
\end{itemize}
\end{lem}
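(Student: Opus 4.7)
My strategy is to prove (a) and (b) in tandem using the sectional structure of $H_x\subset\zS$, and then to derive (c) as a formal consequence.

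For (b), let $y\in\cals_x$ and suppose $\alpha:y\to z$ is an arrow in the quiver of $C$. This corresponds to an irreducible morphism $I(y)\to I(z)$ in $\textup{mod}\,C$. Since $I(y)\in\zS$ and $\zS$ is a presection, either $I(z)\in\zS$ or $\tau I(z)\in\zS$; the latter would place $I(z)$ strictly to the right of $\zS$, which is ruled out because $\zS$ is a rightmost slice and $I(z)$ is injective. Hence $I(z)\in\zS$, and the arrow $I(y)\to I(z)$ is an arrow of $\zS$. Applying rule (c) of the definition of $H_x$ with $L=I(y)\in\calj\subset H_x$ (so that $L$ is itself an injective member of $H_x$), we obtain $M=I(z)\in H_x$, i.e.\ $z\in\cals_x$.

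For (a), I will show that no relation of $C$ is supported on $\cals_x$. Suppose for contradiction that $\rho$ is such a relation. Via the standard dictionary between relations in a bound quiver and mesh configurations in the Auslander--Reiten quiver, $\rho$ produces a non-sectional configuration among the modules $I(y)$ with $y\in\cals_x$. Since $\zS$ is a complete slice of the tilted algebra $C$, the endomorphism algebra $\End_C(\bigoplus_{M\in\zS}M)^{\textup{op}}$ is hereditary, and the corner indexed by $\cals_x$ inherits this property, contradicting the existence of $\rho$. The technical subtlety, which I expect to be the main obstacle, is the handling of the vertices of $\calj^-$: these are non-injective modules forced into $H_x$ by rule (c), and they do not appear in the classical construction $G_x$ of \cite{ABS4}. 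I must verify that their inclusion in $H_x$ still respects sectionality in $\zS$, in the sense that every path from a vertex of $\calj^-$ to its associated injective successor in $\calj$ is sectional, so that no new relations are introduced through these intermediate vertices.

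Part (c) then follows easily from (a) and (b). Setting $H=\cals_x$ (hereditary by (a)) and letting $C'$ be the full subcategory of $C$ on the complementary set of vertices, the closure property in (b) forbids arrows in the wrong direction between the two parts, yielding the triangular matrix decomposition with $M$ the space of remaining connecting arrows, naturally a $C'$-$H$-bimodule. To verify that $C'$ is again tilted, I observe that $\zS\setminus H_x$, viewed as a set of $C'$-modules, remains a complete slice of $C'$: convexity and the almost-split condition restrict from $\zS$, and sincerity over $C'$ follows because any composition factor indexed by a vertex of $\cals_x$ has already been excised by the restriction to $C'$.
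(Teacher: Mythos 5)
There is a genuine gap in your proof of (b), and it comes from reversing the direction of the morphism between injectives. With the paper's conventions (right modules, as in ASS), an arrow $y\to z$ in the quiver of $C$ induces a nonzero morphism $I(z)\to I(y)$, not $I(y)\to I(z)$: the simple $S(y)$ sits in the second socle layer of $I(z)$ precisely when there is an arrow $y\to z$. So $I(z)$ is a \emph{predecessor} of $I(y)$ in $\textup{mod}\,C$, and the closure property you need is condition (b) of the definition of $H_x$ (closure under predecessors in $\zS$), not condition (c). Your appeal to rule (c) with ``$L=I(y)$ is its own injective successor'' is therefore aimed at the wrong end of the arrow, and in any case rests on a strained reading of that rule. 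A second, independent problem is your claim that the induced map between injectives is \emph{irreducible}: an arrow in the quiver of $C$ gives a nonzero morphism $I(z)\to I(y)$, but not in general an arrow of the Auslander--Reiten quiver, so the presection property cannot be invoked to conclude $I(z)\in\zS$. The paper instead places $I(z)$ in $\zS$ by sincerity plus convexity: since $\zS$ is sincere there is some $N\in\zS$ with a nonzero map $N\to I(z)$, giving a path $N\to I(z)\to I(y)$ with both endpoints in $\zS$, whence $I(z)\in\zS$ by convexity of the complete slice; then closure of $H_x$ under predecessors gives $I(z)\in H_x$.

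On (a), your underlying idea is the paper's: $\End_C\bigl(\bigoplus_{y\in\cals_x}I(y)\bigr)$ is a corner of the hereditary endomorphism algebra of the complete slice $\zS$, hence hereditary. But the detour through ``relations versus mesh configurations'' and the worry about sectionality at the vertices of $\calj^-$ are unnecessary (and the claimed dictionary is not a rigorous tool); the corner argument already settles (a) regardless of how $\calj^-$ sits inside $H_x$. Your derivation of (c) from (a) and (b) is in the right spirit (the paper states only that (c) follows from (a) and (b)), but note that the direction error in (b) would also flip the triangular shape of the matrix decomposition, so it must be corrected before (c) can be read off.
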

\begin{proof}
 (a) Let $H=\End(\oplus_{y\in\cals_x}I(y)).$ Then $H$ is a full subcategory of the hereditary endomorphism algebra of $\zS$. Therefore $H$ is also hereditary, and so $\cals_x$ is hereditary.
 
 (b) Let $y\in \cals_x$ and $y\to z$ in $C$. Then there exists  a morphism $I(z)\to I(y)$. Because $I(z)$ is an injective $C$-module and $\zS$ is sincere, there exist a module $N\in \zS$ and a non-zero morphism $N\to I(z)$. Then we have a path $N\to I(z)\to I(y)$, and since $N,I(y)\in \zS$, we get that $I(z)\in \zS$ by convexity of the slice $\zS$ in $\textup{mod}\,C$. Moreover, since $I(y)\in H_x$ and $H_x$ is closed under predecessors in $\zS$, it follows that $I(z)\in H_x$. Thus $z\in \cals_x$ and this shows (b).
 
 (c) This follows from (a) and (b).
\end{proof}
We recall that the cluster duplicated algebra was introduced in \cite{ABS3}.
\begin{cor}
 \label{cor dup}
 The cluster duplicated algebra $\overline{C}$ of $C$ is of the form
 \[ \overline{C}= \left[
\begin{array}
 {cccc} 
H&0&0&0\\
M&C'&0&0\\
0&E_0&H&0\\
0&E_1&M&C'
\end{array}
 \right]
 \]
 where $E_0=\Ext^2_C(DC',H)$ and  $E_1=\Ext^2_C(DC',C')$.
\end{cor}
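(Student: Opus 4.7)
The approach is to substitute the block decomposition $C = \left[\begin{array}{cc} H & 0 \\ M & C' \end{array}\right]$ from Lemma~\ref{lem S}(c) into the definition from \cite{ABS3} of the cluster duplicated algebra, $\overline{C} = \left[\begin{array}{cc} C & 0 \\ \E & C \end{array}\right]$, and then decompose the $C$-$C$-bimodule $\E$ into four blocks using the idempotents $e_H, e_{C'}$ coming from the two block rows of $C$. Each block $e_i \E e_j$ is naturally identified with $\Ext^2_C(e_j \cdot DC,\, e_i \cdot C)$.

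The closure of $\cals_x$ under successors (Lemma~\ref{lem S}(b)) provides the key inputs. First, $e_H C \cong H$ as right $C$-modules. Second, since no arrows go from $\cals_x$ to $C_0 \setminus \cals_x$, for each $y \in C_0 \setminus \cals_x$ all predecessors of $y$ in $C$ remain in $C_0 \setminus \cals_x$, whence $I_C(y) = I_{C'}(y)$ and consequently $e_{C'} \cdot DC \cong DC'$. Third, each projective $P_C(y)$ with $y \in C_0 \setminus \cals_x$ fits into a short exact sequence $0 \to e_y M \to P_C(y) \to P_{C'}(y) \to 0$, which sums over such $y$ to give $0 \to M \to e_{C'}C \to C' \to 0$ of right $C$-modules.

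I then argue that the blocks with $e_H$ on the right vanish. For $y \in \cals_x$, the injective $I_C(y)$ lies in $H_x \subseteq \zS$, which corresponds to a complete slice in the tilted algebra $C$; since every module in a complete slice has projective dimension at most one, $\Ext^2_C(I_C(y),-) = 0$. This yields $e_H \E e_H = 0$ and $e_{C'} \E e_H = 0$, giving the two zero entries in the lower-left block. For the remaining blocks, $e_H \E e_{C'} \cong \Ext^2_C(DC', H) = E_0$ is immediate from the identifications above. For $e_{C'} \E e_{C'} \cong \Ext^2_C(DC', e_{C'}C)$, applying $\Ext^\bullet_C(DC',-)$ to the short exact sequence above and using $\textup{gl.dim}\,C \le 2$ to kill $\Ext^3_C(DC', M)$ together with the vanishing $\Ext^2_C(DC', M) = 0$, one obtains $e_{C'} \E e_{C'} \cong \Ext^2_C(DC', C') = E_1$.

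The main technical step is the vanishing $\Ext^2_C(DC', M) = 0$. My expectation is that it follows from $\textup{pd}_C M \le 1$ (valid because $M$ is an $H$-module, $H$-projectives coincide with the corresponding $C$-projectives by closure under successors, and $H$ is hereditary) together with a careful analysis of how the minimal $C$-projective resolution of $DC'$ maps to $M$; alternatively, from the recollement structure on $\textup{mod}\,H,\ \textup{mod}\,C,\ \textup{mod}\,C'$ induced by the triangular matrix form of $C$.
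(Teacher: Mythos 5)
Your overall strategy coincides with the paper's: write $C$ in the triangular form of Lemma~\ref{lem S}(c), decompose the bimodule $\Ext^2_C(DC,C)$ into blocks via the idempotents $e_H$ and $e_{C'}$, and kill the two blocks $\Ext^2_C(DC,C)e_H$ using that the injectives $I_C(y)$ with $y\in\cals_x$ lie in $H_x\subseteq\zS$ and therefore have projective dimension at most one. Your identifications $e_HC\cong H$, $e_{C'}\cdot DC\cong DC'$ and the exact sequence $0\to M\to e_{C'}C\to C'\to 0$ are correct and make explicit what the paper compresses into ``multiplying by idempotents.''

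The gap is in your treatment of $E_1$. The vanishing $\Ext^2_C(DC',M)=0$, which you single out as the main technical step and only conjecture, is false in general. Take $C=kQ/(\alpha\beta)$ with $Q\colon 1\xrightarrow{\ \alpha\ }2\xrightarrow{\ \beta\ }3$. Then $\zS=\{P(2),S(2),P(1)\}$ is a rightmost slice with unique (injective) source $I(3)=P(2)$, the completion gives $\cals_3=\{3\}$, hence $H=k$, $C'=k(1\to 2)$ and $M=\langle\beta\rangle\cong P_C(3)$ as a right $C$-module; since $I_C(1)=S(1)$ has minimal projective resolution $0\to P(3)\to P(2)\to P(1)\to S(1)\to 0$, one computes $\Ext^2_C(DC',M)\supseteq\Ext^2_C(S(1),P(3))\cong k\neq 0$. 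What is actually needed is the weaker assertion that the map $\Ext^2_C(DC',M)\to\Ext^2_C(DC',e_{C'}C)$ induced by the inclusion is zero; by right exactness of $\Ext^2_C(DC',-)$ and the fact that the projective cover of $M$ involves only the $P_C(z)$ with $z\in\cals_x$, this amounts to the vanishing of the product $M\cdot E_0$ inside $\Ext^2_C(DC,C)$ (in the example above this is the relation $\beta\gamma=0$ in $\Ctilde$), and that requires an argument you do not supply. Note also that your first proposed route, $\textup{pd}_C M\le 1$, bounds the wrong variable: it yields $\Ext^2_C(M,-)=0$, not $\Ext^2_C(-,M)=0$. To be fair, the paper's own one-line proof is silent on this point as well, and every other step of your argument is sound; but as written the identification of the $(4,2)$ entry with $\Ext^2_C(DC',C')$ does not go through.
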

\begin{proof}
 
We start by writing $C$ in the matrix form of the lemma. By definition, $H$ consists of those $y\in C_0$ such that the corresponding injective $I(y)$ lies in $ H_x$ inside the slice $\zS$. In particular, the projective dimension of these injectives is at most 1, hence $\E=\Ext^2_C(DC',C)$. The result now follows upon multiplying by idempotents.
\end{proof}

\begin{definition}
 Let $x$ be a strong sink in $C$. The reflection at $x$ of the algebra $C$ is 
 \[\zs_x^+ C= \left[
\begin{array}
 {cc}C'&0\\E_0&H
\end{array}\right]\]
where $E_0=\Ext^2_C(DC',H)$.
\end{definition}

\begin{prop}\label{prop reflection}
 The reflection $\zs_x^+C$ of $C$ is a tilted algebra having $\zs_x^+\zS$ as a complete slice. Moreover the relation-extensions of  $C$ and  $\zs_x^+\zS$ are isomorphic.
\end{prop}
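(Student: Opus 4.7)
The plan is to invoke the characterization from \cite{ABS2} recalled at the end of Section~2: a tilted algebra $D$ satisfies $\widetilde{D}=B$ if and only if there is a local slice $\zS_0$ in $\textup{mod}\,B$ with $D=B/\Ann_B\zS_0$. Setting $B=\widetilde{C}$ and $\zS'=\zs_x^+\zS$, Theorem \ref{thm reflection} guarantees that $\zS'$ is a local slice in $\textup{mod}\,B$. It therefore suffices to prove
\[
\zs_x^+C \;\cong\; B/\Ann_B(\zS'),
\]
for this single identification simultaneously yields tiltedness of $\zs_x^+C$, that $\zS'$ embeds as a complete slice of $\zs_x^+C$, and the equality $\widetilde{\zs_x^+C}=B=\widetilde{C}$ of the relation-extensions.

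To establish this identification, I would work inside the cluster duplicated algebra $\overline{C}$ of Corollary \ref{cor dup}: both $C$ and $\zs_x^+C$ arise as consecutive $2\times 2$ diagonal blocks of $\overline{C}$, namely $C$ as the upper-left block and $\zs_x^+C=\left[\begin{smallmatrix}C'&0\\E_0&H\end{smallmatrix}\right]$ as the block indexed by rows and columns $\{2,3\}$. Thus the arrows and relations of $\zs_x^+C$ are already built into the presentation of $\overline{C}$, and the task is to match this presentation with the intrinsic presentation of $B/\Ann_B(\zS')$ obtained from the endomorphism algebra of $\bigoplus_{X\in\zS'}X$.

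The matching proceeds in three steps. For vertices, the defining decomposition
\[
\zS'=(\zS\setminus H_x)\cup\tau^{-1}\cale\cup\tau^{-2}(\calj\cup\calj^-)
\]
yields a bijection with the vertices of $\zs_x^+C$: summands in $\zS\setminus H_x$ are indexed by $C_0\setminus\cals_x$, the vertex set of $C'$, while the remaining summands are indexed by $\cals_x$, the vertex set of $H$, after identifying $\tau^{-2}I(y)$ (computed in the cluster category) with the appropriate simple of $H$ in its ``shifted'' copy inside $\overline{C}$. For morphisms internal to each of the two blocks $C'$ and $H$, the matching is immediate from Lemma \ref{lem S} together with the standard embeddings of $\textup{mod}\,C'$ and $\textup{mod}\,H$ into $\textup{mod}\,B$. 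For the cross-block morphisms from the $H$-indexed summands to the $C'$-indexed summands, one computes in the cluster category, following the Hom-chain used in the proof of Theorem \ref{thm 2.1}, that the space of such morphisms is precisely $\Ext^2_C(DC',H)=E_0$.

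The main obstacle lies in this last computation of the cross-block bimodule: one must check that the closure conditions (a)--(c) defining $H_x$ cut out exactly those predecessors contributing to $\Ext^2$ between $DC'$ and $H$, so that the bimodule of new arrows in $B/\Ann_B(\zS')$ coincides with $E_0$ on the nose, with no extra morphisms appearing and none missing. Once the bimodule is correctly identified, the preservation of the multiplicative structure, i.e.\ of the relations, follows as in \cite[Section 3.3]{ABS}, finishing the identification $B/\Ann_B(\zS')\cong\zs_x^+C$ and hence the proposition.
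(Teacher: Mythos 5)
Your overall strategy coincides with the paper's: both arguments reduce the three assertions of the proposition to the single identification $\zs_x^+C\cong B/\Ann_B(\zs_x^+\zS)$, using Theorem \ref{thm reflection} to know that $\zs_x^+\zS$ is a local slice and the slice--annihilator characterization of \cite{ABS2} to extract tiltedness, the complete slice, and the relation-extension. The divergence, and the problem, lies in how that identification is established. You propose to match the presentation of $B/\Ann_B(\zs_x^+\zS)$ against the block $\left[\begin{smallmatrix}C'&0\\E_0&H\end{smallmatrix}\right]$ of $\overline{C}$ by computing all morphism spaces between summands of the reflected slice, and you yourself isolate the decisive point --- that the cross-block morphism space is exactly $E_0=\Ext^2_C(DC',H)$, with no extra morphisms and none missing --- as ``the main obstacle,'' without carrying it out. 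That is precisely where the content of the proposition lies, so as written the argument has a genuine gap.

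The paper avoids this Hom-space computation altogether. It proves instead the weaker but sufficient statement that $\textup{supp}(\zs_x^+\zS)\subset \zs_x^+C$, by a case analysis over the four pieces $\tau^{-2}\calj$, $\tau^{-2}\calj^-$, $\tau^{-1}\cale$ and $\zS\setminus H_x$: the modules in $\tau^{-2}\calj$ are the projectives $P(y')$ with $y'\in H$; for $\tau^{-2}\calj^-$ (and similarly $\tau^{-1}\cale$) one takes the sectional path from an injective source of $\zS$ through the given module, applies $\tau^{-2}$ (resp.\ $\tau^{-1}$), and excludes support at any $h\in H$ because a nonzero map $X\to I(h)$ with $I(h)\in\zS$ would contradict the absence of morphisms from $\tau^{-2}\zS$ to $\zS$; the modules of $\zS\setminus H_x$ are supported in $C'$ by construction. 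Since $B/\Ann(\zs_x^+\zS)$ is already known, via \cite{ABS2}, to be tilted with $\zs_x^+\zS$ as a sincere complete slice, this support containment suffices to pin the quotient down as $\zs_x^+C$. If you want to rescue your route, the same sectional-path arguments are what you would need to control the cross-block bimodule; but note that a full identification of that bimodule with $E_0$ is more than is required once the annihilator characterization has been invoked.
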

\begin{proof}
 We first claim that the support $\textup{supp}(\zs_x^+\zS)$ of $\zs_x^+\zS$ is contained in $ \zs_x^+C$. 
 Let $X\in\zs_x^+\zS$. Recall that
 $\zs_x^+\zS=\tau^{-2}(\calj\cup\calj^-)\cup\tau^{-1}\cale\cup(\zS\setminus H_x)$. 
 If $X\in \tau^{-2}\calj$, then $X=P(y')$ is projective corresponding to a point $y'\in H$. Thus $I(y)\in H_x$ and the radical of $P(y)$ has no non-zero morphism into $I(y)$. Therefore $\textup{supp}(X)\subset \zs^+_XC$.
 
Assume next that $X\in \tau^{-2}\calj^-$, that is, $X=\tau^{-2}Y$, where $Y\in \calj^-$ has an injective successor $I(z)$ in $H_x$. Because all sources in $\zS$ are injective, there is an injective $I(y') \in \zS$ and a sectional path 
$I(y')\to\ldots\to Y\to \ldots \to I(z)$.
Applying $\tau^{-2}$, we obtain a sectional path
$P(y')\to\ldots\to X\to \ldots \to P(z)$.
In particular the point $y'$ belongs to the support of $X$.
Assume that there is a point $h$ in $H$ that is in the support of $X$. Then there exists a nonzero morphism $X\to I(h)$. But $I(h)\in \zS$ and there is no morphism from $X\in \tau^{-2}\zS$ to $\zS$. Therefore $\textup{supp}(X)\subset \zs^+_xC$.

By the same argument, we show that if $X\in \tau^{-1}\cale$, then  $\textup{supp}(X)\subset \zs^+_xC$.

Finally, all modules of $\zS\setminus H_x$ are supported in $C'$. This establishes our claim.

Now, by Theorem \ref{thm reflection},  $ \zs^+_x\zS$ is a local slice in  $\textup{mod}\, \Ctilde.$ Therefore $\Ctilde/\Ann\,\zs^+_x\zS$ is a tilted algebra in which $\zs^+_x\zS$ is a complete slice. Since the support of $\zs^+_x\zS$ is the same as the support of $\zs^+_x C$, we are done.
\end{proof}

\smallskip
We now come to the main result of this section, which states that any two tilted algebras that have the same relation-extension are linked to each other by a sequence of reflections and coreflections.

\begin{definition}
Let $B$ be a cluster-tilted algebra and let $\Sigma$ and $\Sigma'$ be two local slices in $\text{mod}\,B$.  We write $\Sigma \sim \Sigma'$ whenever $B/\Ann\,\Sigma = B/\Ann\,\Sigma'$.  
\end{definition}

\begin{lem}\label{lem 310}
 Let $B$ be a cluster-tilted algebra, and $\zS_1, \zS_2 $ be two local slices in $\textup{mod}\,B$.  Then there exists a sequence of reflections and coreflections $\sigma$ such that 
 \[ \sigma \Sigma_1\sim \Sigma_2.\]
\end{lem}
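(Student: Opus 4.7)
The plan is to show that the set of local slices of $B$, modulo the equivalence $\sim$, is connected under the reflection and coreflection operations $\zs_x^\pm$.

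The preliminary reduction is to observe that every local slice lies in the transjective component of $\zG(\textup{mod}\,B)$: a local slice has $\textup{rk}\,K_0(B)$ indecomposable summands whereas tubes have bounded rank, so no local slice can fit inside a union of tubes. Hence both $\zS_1$ and $\zS_2$ are transjective and correspond, via the standard embedding recalled in Section 2.3, to tilted quotients $C_i = B/\Ann\,\zS_i$ with $\widetilde{C_i} = B$. Since $\zS \sim \zS'$ exactly when $\zS$ and $\zS'$ arise from the same tilted quotient $C$, the lemma is equivalent to the following statement about tilted quotients: for any two tilted quotients $C_1, C_2$ of $B$ with $\widetilde{C_i} = B$, there is a sequence $\zs$ of reflections and coreflections such that $\zs\, C_1 = C_2$.

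To prove this, I would argue by induction on a discrepancy invariant between $C_1$ and $C_2$, for instance the number of vertices at which representative complete slices of $C_1$ and $C_2$, lifted to the universal cover of the transjective component, fail to coincide. At each inductive step, choose a strong sink $x$ of $C_1$ at a vertex contributing to the discrepancy and apply $\zs_x^+$ (or dually, a strong source and apply $\zs_y^-$). By Proposition \ref{prop reflection}, the result is again a tilted quotient of $B$, and by the design of the decomposition $H_x = \calj \sqcup \calj^- \sqcup \cale$ the local configuration at $x$ in $\zs_x^+ C_1$ matches the one in $C_2$. Iterating yields a finite sequence of reflections and coreflections taking $C_1$ to $C_2$.

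The main obstacle will be verifying that each reflection strictly decreases the discrepancy count without creating new discrepancies elsewhere. This requires a detailed analysis of the BGP-style effect of $\zs_x^+$ on the quiver and relations of the tilted algebra, particularly in the extended setting of this paper where $\calj^- \neq \emptyset$ can occur and the classical reflection of \cite{ABS4} does not directly apply. Choosing a monotone processing order for the discrepancies — for example, traversing the transjective component from rightmost slices toward leftmost ones along its partial order — combined with the finiteness of the quiver of $B$ should force the procedure to terminate at a slice $\zs\,\zS_1 \sim \zS_2$.
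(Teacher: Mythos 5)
Your preliminary reduction is fine, but the core of your argument has a genuine gap that you yourself flag and do not close. The induction on a ``discrepancy count'' between lifted slices requires two things you cannot simply arrange: (i) that at each step there is a reflection available \emph{at a vertex of your choosing}, and (ii) that this reflection strictly decreases the discrepancy without creating new discrepancies elsewhere. Neither holds as stated. Reflections $\zs_x^+$ are only defined at strong sinks, i.e.\ at injective sources of the \emph{rightmost} representative of the $\sim$-class, so the vertices at which you may reflect are dictated by where the injectives sit in the transjective component, not by where $\Sigma_1$ and $\Sigma_2$ disagree. Moreover a reflection moves the affected part of the slice rightward by $\tau^{-1}$ or $\tau^{-2}$; if $\Sigma_2$ lies to the left of $\Sigma_1$ at a discrepancy vertex, reflecting there makes things worse, and controlling the interleaving of reflections and coreflections against a single discrepancy functional is exactly the hard point. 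Your appeal to ``the design of the decomposition $H_x=\calj\sqcup\calj^-\sqcup\cale$'' to claim the local configuration after reflection matches $C_2$ is unsupported: the reflection does not see $C_2$ at all.

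The paper avoids this entirely by a normal-form argument with a different, manifestly monotone invariant: the number of injective successors of the slice in the transjective component $\mathcal{T}$. One first replaces $\Sigma$ by the rightmost slice $\Sigma^+$ in its $\sim$-class, reflects at a strong sink there, and observes that the result has strictly fewer injective successors in $\mathcal{T}$; since one can always reflect at a strong sink, iterating drives each of $\Sigma_1,\Sigma_2$ to slices $\Sigma^1_\infty,\Sigma^2_\infty$ with \emph{no} injective successors in $\mathcal{T}$, and any two such slices satisfy $\Sigma^1_\infty\sim\Sigma^2_\infty$. The desired $\sigma$ is then the composite of the reflections applied to $\Sigma_1$ followed by the coreflections undoing those applied to $\Sigma_2$. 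If you want to salvage your approach, you should replace your discrepancy count by this ``number of injective successors'' invariant and aim both slices at the common terminal class rather than at each other.
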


\begin{proof}
Given a local slice $\Sigma$ in $\text{mod}\,B$ such that $\Sigma$ has injective successors in the transjective component $\mathcal{T}$ of $\Gamma(\text{mod}\,B)$, let $\Sigma^+$ be the rightmost local slice such that $\Sigma\sim \Sigma^+$.  Then $\Sigma^+$ contains a strong sink $x$, thus reflecting in $x$ we obtain a local slice $\sigma^+_{x}\Sigma^+$ that has fewer injective successors in $\mathcal{T}$ than $\Sigma$.  To simplify the notation we define $\sigma^+_x\Sigma = \sigma^+_{x}\Sigma^+$. Similarly, we define $\sigma^-_y\Sigma=\sigma^-_y\Sigma^-$, where $\Sigma^-$ is the leftmost local slice containing a strong source $y$ and $\Sigma\sim\Sigma^-$.

Since we can always reflect in a strong sink, there exist sequences of reflections such that 
\[ \sigma^+_{x_r} \cdots \sigma^+_{x_2}\sigma^+_{x_1} \Sigma_1 = \Sigma^1_{\infty}\]
\[ \sigma^+_{y_s} \cdots \sigma^+_{y_2}\sigma^+_{y_1} \Sigma_2 = \Sigma^2_{\infty}\]
and $\Sigma^1_{\infty}, \Sigma^2_{\infty}$ have no injective successors in $\mathcal{T}$.  This implies that $\Sigma^1_{\infty}\sim\Sigma^2_{\infty}$.  Let 
\[\sigma = \sigma^-_{y_1} \sigma^-_{y_{2}}\cdots \sigma^-_{y_s}\sigma^+_{x_r} \cdots \sigma^+_{x_2}\sigma^+_{x_1}\]
thus $\sigma\Sigma_1\sim \Sigma_2$.   
\end{proof}
\begin{thm}\label{thm mainreflection}
 Let $C_1$ and $C_2$ be two tilted algebras that have the same relation-extension.   Then there exists a sequence of reflections and coreflections $\sigma$ such that $\sigma C_1 \cong C_2$.
\end{thm}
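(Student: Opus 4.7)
The plan is to translate the statement about tilted algebras into a statement about local slices in $\textup{mod}\,B$ and then invoke Lemma \ref{lem 310}. Since $C_1$ and $C_2$ have the same relation-extension $B$, the characterization recalled in the Preliminaries gives, for $i=1,2$, a complete slice $\zS_i$ of $C_i$ which embeds as a local slice in $\textup{mod}\,B$ satisfying $C_i = B/\Ann_B\,\zS_i$. Thus the question reduces to producing a sequence of reflections and coreflections $\zs$ such that $\zs C_1 = B/\Ann_B\,\zs \zS_1 = B/\Ann_B\,\zS_2 = C_2$.

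Next, I would apply Lemma \ref{lem 310} directly to $\zS_1$ and $\zS_2$ to obtain a sequence $\zs$ of reflections and coreflections with $\zs\zS_1\sim \zS_2$, that is, $B/\Ann_B\,\zs\zS_1 = B/\Ann_B\,\zS_2$. The right-hand side equals $C_2$ by the setup, so it remains only to prove that $\zs C_1 \cong B/\Ann_B\,\zs\zS_1$.

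For this I would proceed by induction on the length of $\zs$, using Proposition \ref{prop reflection} at the inductive step. If $C'$ is a tilted algebra with relation-extension $B$ and complete slice $\zS'$ (viewed as a local slice in $\textup{mod}\,B$), and if $x$ is a strong sink of $C'$, then Proposition \ref{prop reflection} asserts that $\zs_x^+ C'$ is tilted with complete slice $\zs_x^+\zS'$, and the relation-extension is preserved. Since a tilted algebra is determined by the annihilator of any of its complete slices, we obtain $\zs_x^+ C' = B/\Ann_B\,\zs_x^+\zS'$, and the analogous equality for coreflections. Iterating along $\zs$ then yields $\zs C_1 = B/\Ann_B\,\zs\zS_1$, which combined with the previous paragraph gives the result.

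The main technical point to check is that at each intermediate stage the slice we want to reflect is actually rightmost (or leftmost) in the sense required by the definition of reflection, so that Proposition \ref{prop reflection} applies. This is handled internally by the proof of Lemma \ref{lem 310}: at each step one replaces the current local slice $\zS$ by the rightmost slice $\zS^+$ (or leftmost $\zS^-$) in its $\sim$-equivalence class before reflecting. The passage from $\zS$ to $\zS^+$ does not change the associated tilted algebra precisely because $\zS\sim\zS^+$ means $B/\Ann_B\,\zS = B/\Ann_B\,\zS^+$. Hence the induction goes through and we conclude $\zs C_1 \cong C_2$.
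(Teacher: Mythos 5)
Your proposal is correct and follows essentially the same route as the paper: reduce to local slices via the characterization $C_i = B/\Ann_B\,\zS_i$ from \cite{ABS2}, apply Lemma \ref{lem 310} to connect the two slices by reflections and coreflections, and use Theorem \ref{thm reflection} together with Proposition \ref{prop reflection} to track the effect on the algebras. The paper's proof is just a terser version of the same argument; your explicit induction and the remark about passing to the rightmost slice in each $\sim$-class merely spell out details the paper leaves implicit.
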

\begin{proof}
 Let $B$ be the common relation-extension of the tilted algebras $C_1$ and $C_2$. By \cite{ABS2}, there exist local slices $\zS_i$ in $\textup{mod}\,B$ such that $C_i=B/\Ann\,\zS_i$, for $i=1,2$.
 Now the result follows from  Lemma \ref{lem 310} and Theorem~\ref{thm reflection}.
\end{proof}

\begin{example}
 Let $A$ be the path algebra of the quiver
 \[\xymatrix@R2pt@C10pt{ & & 1\ar@/_10pt/[lldd]\ar[ld]\\
 & 2\ar[ld] \\
 3\\
 &4\ar[lu]\\
 &&5\ar@<1pt>[lu]\ar@<-1pt>[lu]\ar[ld]\\
 &6  
  }
 \]
 Mutating at the vertices 4,5, and 2 yields the cluster-tilted algebra $B$ with quiver
 
  \[\xymatrix@R2pt@C10pt{ & & 1\ar@<0pt>@/_10pt/[lldd]\ar@<-2pt>@/_10pt/[lldd]\\
 & 2\ar[ur] \\
 3\ar[ur]\ar@<0pt>@/^15pt/[rrdd]\ar@<-2pt>@/^15pt/[rrdd]\\
 &4\ar@<2pt>[lu]\ar@<-2pt>[lu]\ar[ul]\ar@<1pt>[dd]\ar@<-1pt>[dd]\\
 &&5\ar@<1pt>[lu]\ar@<-1pt>[lu]\\
 &6\ar[ur]
  }
 \]
 In the Auslander-Reiten quiver of $\textup{mod}\,B$ we have the following local configuration.
%

  \[\xymatrix@!@R0pt@C1pt{ 
  && I(1)\ar[rd]&& \circ&& P(1) \\
  & 1\ar[ru]&& 2\ar[rd] && {\begin{smallmatrix} 3\\ 5\,5\\4\end{smallmatrix}}\ar[ru] \\
  I(3)\ar[ru]\ar@/^15pt/[rruu] \ar[rd]&& \circ&& P(3)\ar[ru]\ar@/^15pt/[rruu]\\
&  {\begin{smallmatrix} 5555\\444 \end{smallmatrix}}\ar@<1pt>[rd]\ar@<-1pt>[rd] &&   {\begin{smallmatrix} 55\\4 \end{smallmatrix}}\ar@<1pt>[rd]\ar@<-1pt>[rd] \ar[ru]\\
  && {\begin{smallmatrix} 555\\44 \end{smallmatrix}}\ar@<1pt>[ru]\ar@<-1pt>[ru] && 5 \ar[rd] \\
  &I(6) \ar[ru] &&\circ && P(6)
  }
 \] \bigskip
where 
\[ \begin{array}{ccc} I(1)= {\begin{smallmatrix} 2\\1\end{smallmatrix}} & I(3)=  {\begin{smallmatrix} 2\ \ 5555\\11\ 444 \\ 3\end{smallmatrix}} &
I(6)= \begin{smallmatrix} 555\\44\\6\end{smallmatrix} \end{array}\]
\smallskip

The 6 modules on the left form a rightmost local slice $\zS$ in which both $I(3)$ and $I(6)$ are sources, so 3 and 6 are strong sinks.
For both strong sinks the subset $\calj^-$ of the completion consists of the simple module $1$. The simple module $2=\tau^{-1}1$ does not lie on a local slice.

  The completion $H_6$ is the whole local slice $\zS$ and therefore the reflection $\zs_6^+\zS$ is the local slice consisting of the 6 modules on the right containing both $P(1)$ and $P(6)$.

On the other hand, the completion $H_3$ consists of the four modules $I(3)$, $S(1)$, $I(1) $ and   
${\begin{smallmatrix} 5555\\444 \end{smallmatrix}}$,
and therefore
 the reflection $\zS'=\zs_3^+\zS$ is the local slice consisting of the 6 modules on the straight line from $I(6)$ to $P(1)$.
This local slice admits the strong sink $6$ and the completion $H'_6$ in $\zS'$ consists of the two modules $I(6)$ and  ${\begin{smallmatrix} 555\\44 \end{smallmatrix}}$. Therefore the reflection $\zs_6^+\zS'$ is equal to $\zs_6^+\zS$. 
Thus 
\[\zs_6^+\zS = \zs_6^+(\zs_3^+\zS).\] 

\end{example}

This example raises the question which indecomposable modules over a cluster-tilted algebra do not lie on a local slice. We   answer this question in a forthcoming publication \cite{ASS2}.


\section{Tubes}

The objective of this section is to show how to construct those tubes of a tame cluster-tilted algebra which contain projectives.  Let $B$ be a cluster-tilted algebra of euclidean type, and let $\mathcal{T}$ be a tube in $\Gamma(\text{mod}\,B)$ containing at least one projective.  First, consider the transjective component of $\Gamma(\text{mod}\,B)$.  Denote by $\Sigma_L$ a local slice in the transjective component that precedes all indecomposable injective $B$-modules lying in the transjective component.  Then $B/ \text{Ann}_B \Sigma_L=C_1$ is a tilted algebra having a complete slice in the preinjective component.  Define $\Sigma_R$ to be a local slice which is a successor of all indecomposable projectives lying in the transjective component.  Then $B/\text{Ann}_B \Sigma_R=C_2$ is a tilted algebra having a complete slice in the postprojective component.  Also, $C_1$ (respectively, $C_2)$ has a tube $\mathcal{T}_1$ (respectively, $\mathcal{T}_2$) containing the indecomposable projective $C_1$-modules (respectively, injective $C_2$-modules) corresponding to the projective $B$-modules in $\mathcal{T}$ (respectively, injective $B$-modules in $\mathcal{T}$).

An indecomposable projective $P(x)$ (respectively, injective $I(x)$) $B$-module that lies in a tube, is said to be a \emph{root projective} (respectively, a \emph{root injective}) if there exists an arrow in $B$ between $x$ and $y$, where the corresponding indecomposable projective $P(y)$ lies in the transjective component of $\Gamma(\text{mod}\,B)$.  

Let $\mathcal{S}_1$ be the coray in $\mathcal{T}_1$ passing through the projective $C_1$-module that corresponds to the root projective $P_B(i)$ in $\mathcal{T}$.  Similarly,  let $\mathcal{S}_2$ be the ray in $\mathcal{T}_2$ passing through the injective that corresponds to the root injective $I_B(i)$ in $\mathcal{T}$.  

Recall that if $A$ is hereditary and $T\in\text{mod}\,A$ is a tilting module, then there exists an associated torsion pair $(\torT, \ftorT)$ in $\text{mod}\,A$, where 

$$\xymatrix@R=5pt{\torT=\{M\in\text{mod}\,A\mid \text{Ext}^1_A(T,M)=0\} \\ \ftorT = \{M\in \text{mod}\,A\mid \text{Hom}_A(T,M)=0\}.}$$

\begin{lem} \label{lem tube1}
With the above notation 
\begin{itemize}
\item [\textup{(a)}] $\mathcal{S}_1 \otimes _{C_{1}} B$ is a coray in $\mathcal{T}$ passing through $P_B(i)$.
\item [\textup{(b)}] $\textup{Hom}_{C_2}(B, \mathcal{S}_2)$ is a ray in $\mathcal{T}$ passing through $I_B(i)$.  
\end{itemize}

\end{lem}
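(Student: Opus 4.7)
The plan is to treat the two parts dually and focus on (a), since (b) follows by an entirely parallel argument. First, I would identify $C_1$ as a tilted algebra with relation-extension $B$: since $\zS_L$ is a local slice in the transjective component that precedes all injective $B$-modules of that component, the quotient $C_1 = B/\Ann_B \zS_L$ is tilted with $\zS_L$ a complete slice, and by \cite{ABS} one has $B = \widetilde{C_1}$. The canonical algebra embedding $C_1 \hookrightarrow B = C_1 \ltimes \Ext^2_{C_1}(DC_1, C_1)$ thus turns $B$ into a $C_1$-$B$-bimodule, so the base-change functor $- \otimes_{C_1} B \colon \textup{mod}\,C_1 \to \textup{mod}\,B$ is well-defined.

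Next, I would use the decomposition $B = C_1 \oplus E_1$ as left $C_1$-modules, where $E_1 = \Ext^2_{C_1}(DC_1,C_1)$, to obtain for every $M \in \textup{mod}\,C_1$ a short exact sequence of $B$-modules
\[ 0 \to M \otimes_{C_1} E_1 \to M \otimes_{C_1} B \to M \to 0 , \]
in which the rightmost term is viewed as a $B$-module through the surjection $B \to C_1$. Specializing at $M = P_{C_1}(i) = e_i C_1$ yields $P_{C_1}(i) \otimes_{C_1} B \cong e_i B = P_B(i)$, so the image of the coray $\cals_1$ does contain $P_B(i)$ at its base.

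The heart of the argument is to verify that this image is a coray of $\calt$. By Lemma \ref{lem 1}, the tubes $\calt_1 \subset \Gamma(\textup{mod}\,C_1)$ and $\calt \subset \Gamma(\textup{mod}\,B)$ share the same rank. For modules $M$ in $\calt_1$ that are sufficiently far from $P_{C_1}(i)$ along the coray, the term $M \otimes_{C_1} E_1$ vanishes because $E_1$ is supported on the arrows introduced by the relation-extension, which are localized around the root projective $P_B(i)$; in that region $- \otimes_{C_1} B$ agrees with the standard embedding, hence commutes with $\tau$ and sends irreducible maps to irreducible maps. Near $P_{C_1}(i)$, the extra summand $M \otimes_{C_1} E_1$ supplies the additional composition factors that turn the embedded coray modules into their counterparts in $\calt$, culminating in $P_B(i)$.

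The main obstacle is to make the ``sufficiently far'' statement precise and to control the behavior of $- \otimes_{C_1} B$ close to the root projective, i.e.\ to prove that the augmented modules remain indecomposable and are linked by irreducible maps forming exactly the coray of $\calt$ through $P_B(i)$. Part (b) is handled dually: the right adjoint $\Hom_{C_2}(B, -)$ of the restriction functor sends $I_{C_2}(i)$ to $I_B(i)$ by Hom-tensor duality, and the same rank, localization and adjunction arguments show that $\Hom_{C_2}(B, \cals_2)$ is the ray of $\calt$ through $I_B(i)$.
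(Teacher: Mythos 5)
Your proposal correctly sets up the induction functor, the exact sequence $0 \to M\otimes_{C_1}E_1 \to M\otimes_{C_1}B \to M \to 0$, and the identification $P_{C_1}(i)\otimes_{C_1}B\cong P_B(i)$, but the heart of the argument --- showing that the image of $\mathcal{S}_1$ is actually a coray of $\mathcal{T}$ --- is both left open and premised on a false claim. You assert that for modules $M$ sufficiently far from $P_{C_1}(i)$ along the coray the correction term $M\otimes_{C_1}E_1$ vanishes, so that $-\otimes_{C_1}B$ agrees with the standard embedding in that region. This is not true: by Corollary \ref{cor5} (whose proof uses $X\otimes_C E\cong D\text{Hom}_C(X,\tau_C\Omega_C DC)$ together with Lemma \ref{lem tube2}), the module $X\otimes_{C_1}E_1$ is the \emph{same nonzero} module for \emph{every} $X$ on the coray, because each such $X$ has a one-dimensional Hom-space to the mouth module terminating its coray. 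In the example following Proposition \ref{prop5}, every module of $\mathcal{S}_1\otimes_{C_1}B$, arbitrarily far from the root projective, is a nontrivial extension by the fixed module $\begin{smallmatrix}3\\4\end{smallmatrix}$. So there is no region where the functor reduces to the standard embedding, and the localization argument cannot be repaired as stated. You also explicitly acknowledge, but do not close, the remaining issues of indecomposability of the augmented modules and irreducibility of the connecting maps --- which is precisely what must be proved.

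The paper circumvents this difficulty by lifting to the hereditary level: writing $C_1=\text{End}_A T$ for a tilting module $T$ over a hereditary algebra $A$, it invokes the commutative square from \cite{SS} relating $\text{Hom}_A(T,-)$ on the torsion class $\mathscr{T}(T)\subset\text{mod}\,A$, the cluster-category functor $\text{Hom}_{\mathcal{C}_A}(T,-)$, and $-\otimes_{C_1}B$. A coray in $\mathcal{T}_1$ lifts to a coray in $\mathcal{T}_A\cap\mathscr{T}(T)$, and $\text{Hom}_{\mathcal{C}_A}(T,-)$ carries it to a coray in $\mathcal{T}$ because that functor induces an equivalence $\mathcal{C}_A/\text{add}\,\tau T\simeq\text{mod}\,B$ compatible with the Auslander--Reiten structure; this packages exactly the indecomposability and irreducibility statements your proposal leaves open. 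Finally, the paper also verifies that the coray of (a) and the ray of (b) lie in, and intersect inside, the same tube, using the almost split sequence $0\to J\to N\to R\to 0$ attached to the radical of $P_B(i)$ and the socle quotient of $I_B(i)$; your proposal does not address this point at all.
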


\begin{proof}
Since $C_1$ is tilted, we have $C_1=\text{End}_A T$ where $T$ is a tilting module over a hereditary algebra $A$.  As seen in the proof of Theorem 5.1 in \cite{SS}, we have a commutative diagram   

$$\xymatrix@C=60pt{\torT\ar@{^{(}->}[d]\ar[r]^{\text{Hom}_A(T,-)}&\mathcal{Y}(T)\ar[d]^{-\otimes_{C_1} B}\\
\mathcal{C}_A\ar[r]^{\text{Hom}_{\mathcal{C}_A}(T,-)}&\text{mod}\,B\;\;}$$
where $\mathcal{Y}(T)=\{N\in\text{mod}\,C\mid \text{Tor}_1^C(N,T)=0\}$. 

Let $\mathcal{T}_A$ be the tube in $\text{mod}\,A$ corresponding to the tube $\mathcal{T}$ in $\text{mod}\,B$.   By what has been seen above, we have a commutative diagram  

$$\xymatrix@C=60pt{\mathcal{T}_A \cap \torT \ar[r]^{\text{Hom}_A(T,-)}\ar[dr]_-{\text{Hom}_{\mathcal{C}_A}(T,-)\;\;\;\;\;}&\mathcal{T}_1\ar[d]^{-\otimes_{C_1} B}\\
& \mathcal{T}_1\otimes_{C_1} B \subset \mathcal{T}\;\;.}$$

Let $\mathcal{S}$ be any coray in $\mathcal{T}_1$, so it can be lifted to a coray $\mathcal{S}_{A}$ in $\mathcal{T}_A\cap \torT$ via the functor $\text{Hom}_A(T,-)$.  If we apply $\text{Hom}_{\mathcal{C}_A}(T,-)$ to this lift, we obtain a coray in $\mathcal{T}_1\otimes_{C_1} B$.  Thus, any coray in $\mathcal{T}_1$ induces a coray in $\mathcal{T}$.   Let $\mathcal{S}_1$ be the coray passing through the root projective $P_{C_1}(i)$.  Then $\mathcal{S}_1\otimes_{C_1} B$ is the coray passing through $P_{C_1}(i)\otimes_{C_1} B = P_B(i)$.  This proves (a) and part (b) is proved dually.

However, we must still justify that the ray $\mathcal{S}_1\otimes_{C_1} B$ and the coray $\text{Hom}_{\mathcal{C}_2}(B, \mathcal{S}_2)$ actually intersect (and thus lie in the same tube of $\Gamma(\text{mod}\,B)$).  Because $P_{C_1}(i)\in\mathcal{S}_1$, we have $P_{C_1}(i)\otimes B \cong P_B(i)\in \mathcal{S}_1\otimes_{C_1} B$, and $P_B(i)$ lies in a tube $\mathcal{T}$.  It is well-known that the injective $I_B(i)$ also lies in $\mathcal{T}$.   In particular, we have the following local configuration in $\mathcal{T}$, where $R$ is an indecomposable summand of the radical of $P_B(i)$ and $J$ an indecomposable summand of the quotient  of $I_B(i)$ by its  socle.  

$$\xymatrix@!C=5pt@R=5pt{I_B(i)\ar[dr]&& \circ \ar@{-->}[dr]&& P_B(i)\\
&J\ar[dr]\ar@{-->}[ur]&&R\ar[ur]\\
&&N\ar[ur]}$$

Now $I_B(i) =\text{Hom}_{C_2} (B, I_C(i))$ is coinduced, and we have shown above that the ray containing it is also coinduced.  Because $I_C(i)\in\mathcal{S}_2$, this is the ray $\text{Hom}_{C_2}(B, \mathcal{S}_2)$.  Therefore, this ray and this coray lie in the same tube, so must intersect in a module $N$, where there exists an almost split sequence 

$$\xymatrix{0\ar[r]&J\ar[r]& N \ar[r] & R \ar[r] & 0.}$$
\end{proof}

\begin{remark}
Knowing the ray $\text{Hom}_{C_2}(B, \mathcal{S}_2)$ and the coray $\mathcal{S}_1\otimes_{C_1} B$ for every root projective $P_B(i)$ in $\mathcal{T}$, one may apply the knitting procedure to construct the whole of $\mathcal{T}$.  In this way, $\mathcal{T}$ can be determined completely.  
\end{remark}

Next we show that all modules over a tilted algebra lying on the same coray change in the same way under the induction functor.  
\begin{lem}\label{lem tube2}
Let $A$ be a hereditary algebra of euclidean type, $T$ a tilting $A$-module without preinjective summands and let $C=\textup{End}_A T$ be the corresponding tilted algebra.  Let $\mathcal{T}_A$ be a tube in $\textup{mod}\,A$ and $T_i\in\mathcal{T}_A$ an indecomposable summand of $T$, such that $\textup{pd}\, I_C(i)=2$.  

Then there exists an $A$-module $M$ on the mouth of $\mathcal{T}_A$ such that we have 
$$\tau_C\Omega_C I_C(i) = \textup{Hom}_A (T,M)$$
in $\textup{mod}\,C$.  In particular, the module  $\tau_C\Omega_C I_C(i)$ lies on the mouth of the tube $\textup{Hom}_A(T, \mathcal{T}_A\cap \torT)$ in $\textup{mod}\,C$.  
\end{lem}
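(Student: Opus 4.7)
The plan combines a derived-category computation with a tube-structure analysis. Since $\textup{pd}\,I_C(i)=2$, the module $\Omega_C I_C(i)$ has no projective summands, so
\[\tau_C\Omega_C I_C(i) \cong D\Ext^2_C(I_C(i), C)\]
by $\tau_C = D\Ext^1_C(-,C)$ together with the standard dimension shift $\Ext^1_C(\Omega_C(-),C)\cong \Ext^2_C(-,C)$. Transporting via the Brenner-Butler derived equivalence $\DC\xrightarrow{\sim}\DA$ sending $C$ to $T$, and using that derived equivalences intertwine the derived Nakayama functor and that the regular module $T_i$ is not $A$-projective, one finds that $I_C(i)$ corresponds to $\tau_A T_i[1]$; hence
\[\Ext^2_C(I_C(i),C)\cong \Hom_{\DA}(\tau_A T_i[1], T[2]) = \Ext^1_A(\tau_A T_i, T).\]

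Next, I compute $D\Ext^1_A(\tau_A T_i, T)$ by invoking Serre duality in the $2$-Calabi-Yau cluster category $\calc_A$. For $A$ hereditary and $X,Y\in\textup{mod}\,A$, one has the standard decomposition $\Ext^1_{\calc_A}(X,Y)\cong \Ext^1_A(X,Y)\oplus \Hom_A(X,\tau_A Y)$. Applying this to both sides of the Serre-duality isomorphism $D\Ext^1_{\calc_A}(T,\tau_A T_i)\cong \Ext^1_{\calc_A}(\tau_A T_i, T)$ and matching the common summand $\Hom_A(T_i,T)\cong D I_C(i)\cong D\Ext^1_A(T,\tau_A T_i)$ (where $I_C(i)=\Ext^1_A(T,\tau_A T_i)$ since $\tau_A T_i\in\ftorT$) identifies the remaining summands and yields
\[D\Ext^1_A(\tau_A T_i, T)\cong \Hom_A(T,\tau_A^2 T_i).\]
Combining the two steps, $\tau_C\Omega_C I_C(i)\cong \Hom_A(T,\tau_A^2 T_i)$ as $C$-modules.

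Finally, I identify a mouth module $M$. Decomposing $\tau_A^2 T_i$ by the tilting torsion pair, $0\to N_t\to \tau_A^2 T_i\to N_f\to 0$ with $N_t\in\torT$ and $N_f\in\ftorT$, we have $\Hom_A(T,\tau_A^2 T_i)=\Hom_A(T,N_t)$ since $\Hom_A(T,N_f)=0$. Because $\tau_A^2 T_i$ lies in the tube $\calt_A$, its submodules within $\calt_A$ form a chain totally ordered by quasi-length at the common quasi-socle, so $N_t$ is one of them. Using that $T$ has no preinjective summands together with the non-crossing branch structure of the tilting summands of $T$ inside $\calt_A$, one checks that $N_t$ equals the quasi-socle of $\tau_A^2 T_i$, a quasi-simple mouth module of $\calt_A$ belonging to $\torT$. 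Setting $M$ to be this mouth module yields the desired formula; the \emph{in particular} clause follows because $\Hom_A(T,-)$ restricted to $\calt_A\cap\torT$ is an equivalence onto its image tube in $\textup{mod}\,C$, carrying quasi-simples to mouth modules. The main obstacle is this identification of $N_t$ with the quasi-socle, which requires a detailed analysis of how the branch configuration of $T$-summands inside $\calt_A$ interacts with the submodule lattice of $\tau_A^2 T_i$; the condition $\textup{pd}\,I_C(i)=2$, equivalent to $\Hom_A(T,\tau_A^2 T_i)\neq 0$, ensures that $N_t$ is nonzero.
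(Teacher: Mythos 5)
Your reduction is a genuinely different and more conceptual route than the paper's: the paper works with explicit minimal projective presentations of $I_C(i)$, splits into two cases according to whether $T_i$ lies on the mouth of $\mathcal{T}_A$, and knits its way to the answer; you instead pass through $\tau_C\Omega_C I_C(i)\cong D\Ext^2_C(I_C(i),C)\cong D\Ext^1_A(\tau_A T_i,T)\cong \Hom_A(T,\tau_A^2 T_i)$ in one stroke, which is attractive and, I believe, correct. Two caveats on that part. First, your opening claim that $\Omega_C I_C(i)$ has no projective summands is false: in the paper's Case 2 one has $\Omega_C I_C(i)=\Hom_A(T,L_1\oplus L_2)\oplus P_C(i)$. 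This is harmless (the dimension shift $\Ext^1_C(\Omega_C I_C(i),C)\cong\Ext^2_C(I_C(i),C)$ and the formula $\tau_C=D\Ext^1_C(-,C)$ on modules of projective dimension $\le 1$ are unaffected by projective summands), but it should not be used as a justification. Second, the ``cancel the common summand'' step via Serre duality in $\calc_A$ only yields a vector-space identification unless you track module structures carefully; it is cleaner to apply the Auslander--Reiten formula $D\Ext^1_A(\tau_A T_i,T)\cong\overline{\Hom}_A(T,\tau_A^2 T_i)$ directly and note that $\overline{\Hom}=\Hom$ here because $T$ has no preinjective summands and $\tau_A^2T_i$ is regular.

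The genuine gap is in the last step, and it is exactly where the entire difficulty of the lemma lives. You need $\Hom_A(T,\tau_A^2T_i)\cong\Hom_A(T,M)$ with $M$ the quasi-socle of $\tau_A^2T_i$, equivalently that $t(\tau_A^2T_i)=M$, i.e.\ that $M\in\torT$ and $\tau_A^2T_i/M\in\ftorT$. Neither is obvious, and your proposal asserts rather than proves them (``one checks that $N_t$ equals the quasi-socle\dots requires a detailed analysis''). Moreover, your intermediate claim that $N_t=t(\tau_A^2T_i)$ must be one of the regular submodules forming the chain inside $\mathcal{T}_A$ is itself unjustified: the trace of $T$ in $\tau_A^2T_i$ is a priori just a submodule and could have postprojective direct summands, so placing it in the tube already requires an argument. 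The paper's proof spends most of its length establishing precisely these torsion-class memberships ($E_1\in\torT$, $M\in\torT$, $\tau_A T_1\in\ftorT$) by contradiction arguments involving the last summand of $T$ on the rays $\omega$ and $\tau\omega$; some version of that combinatorial analysis of the positions of the summands of $T$ in $\mathcal{T}_A$ is unavoidable, and without it your argument does not establish the statement.
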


\begin{proof}
The injective $C$-module $I_C(i)$ is given by 
$$I_C(i) \cong \text{Ext}_A^1(T, \tau T_i) \cong D\text{Hom}_A(T_i, T),$$
where the first identity holds by \cite[Proposition VI 5.8]{ASS} and the second identity is the Auslander-Reiten formula.  Moreover, since $T_i$ lies in the tube $\mathcal{T}_A$ and $T$ has no preinjective summands, we have $\text{Hom}(T_i, T_j) \not=0$ only if $T_j$ lies in the hammock starting at $T_i$.  Furthermore, if $T_j$ is a summand of $T$ then it must lie on a sectional path starting from $T_i$ because $\text{Ext}^1(T_j, T_i)=0$.  This shows that a point $j$ is in the support of $I_C(i)$ if and only if there is a sectional path $T_i\to \dots \to T_j$ in $\mathcal{T}_A$.   We shall distinguish two cases. 

\medskip
Case 1.  If $T_i$ lies on the mouth of $\mathcal{T}_A$ then let $\omega$ be the ray starting at $T_i$ and denote by $T_1$ the last summand of $T$ on this ray.  Let $L_1$ be the direct predecessor of $T_1$ not on the ray $\omega$.  Thus we have the following local configuration in $\mathcal{T}_A$.  

$$\xymatrix@!C=5pt@!R=5pt{\ar[dr]&&\tau T_i \ar[dr] && T_i \ar[dr]\\
&\ar[ur]\ar[dr]&&\ar[ur]\ar[dr]&&\ar@{..}[dr]\\
&&\ar[ur]\ar@{..}[dr]&&\ar@{..}[dr]&&\ar[dr]\\
&&&\ar[dr]&&\tau T_1\ar[dr]\ar[ur]&&T_1\ar[dr]&\\
&&&&\tau L_1\ar[ur]\ar[dr]&&L_1\ar[ur]&&\tau^{-1} L_1 \\
&&&&&E_1\ar[ur]}$$

Then $I_C(i)$ is uniserial with simple top $S(1)$.  Moreover there is a short exact sequence 

$$\xymatrix{0\ar[r]&\tau T_i\ar[r]& L_1\ar[r] & T_1\ar[r] & 0}$$
and applying $\text{Hom}_A(T, -)$ yields 

\begin{equation}\label{s1}
\xymatrix{0\ar[r]&\text{Hom}_A(T, L_1)\ar[r]& P_C(1) \ar[r]^{f} & I_C(i) \ar[r] & \text{Ext}^1(T, L_1)\ar[r]&0}
\end{equation}

By the Auslander-Reiten formula, we have $\text{Ext}^1(T, L_1) \cong D\text{Hom}(\tau^{-1}L_1, T)$ and this is zero because $T_1$ is the last summand of $T$ on the ray $\omega$.  Thus the sequence (\ref{s1}) is short exact, the morphism $f$ is a projective cover, because $I_C(i)$ is uniserial, and hence 

$$\Omega_C I_C(i) \cong \text{Hom}_A(T, L_1).$$
Applying $\tau_C$ yields 

$$\tau_C \Omega_C I_C(i) \cong \tau_C \text{Hom}_A(T, L_1).$$

Let $E_1$ be the indecomposable direct predecessor of $L_1$ such that the almost split sequence ending at $L_1$ is of the form 

\begin{equation}\label{s2}
\xymatrix{0\ar[r]&\tau L_1 \ar[r] & E_1\oplus \tau T_1 \ar[r] & L_1 \ar[r] &0}
\end{equation}

We claim that $E_1\in \torT$.  

Recall that $L_1$ is not a summand of $T$ because $\Omega_C I_C (i) = \text{Hom}_A(T,L_1)$ is non projective.  Also, recall that $T_1$ is the last summand of $T$ on the ray $\omega$.  Suppose $E_1\not\in\torT$, thus $0\not=\text{Ext}^1_A(T, E_1) = D\text{Hom} (\tau^{-1} E_1, T)$.  Then it follows that there is a summand of $T$ on the ray $\tau\omega$ that is a successor of $\tau^{-1}E_1$.  Let $T^1$ denote the first such indecomposable summand.

$$\xymatrix@!C=5pt@!R=5pt{\ar[dr]&&\tau T_1\ar[dr] && T_1\ar[dr]\\
&\tau L_1\ar[ur]\ar[dr] && L_1 \ar[dr]\ar[ur]&&\tau^{-1}L_1\ar[dr]\\
&&E_1 \ar[ur] && \tau^{-1} E_1\ar[dr] \ar[ur]&& \ar@{..}[dr]\\
&&&&&\ar@{..}[dr]&& N \ar[dr]\\
&&&&&&T^1\ar[ur]\ar[dr] && \ar@{..}[dr]\\
&&&&&&&\ar@{..}[dr]&& \omega  \\
&&&&&&&&\tau\omega &&&\\
}$$

Then we have a short exact sequence 

$$\xymatrix{0\ar[r]& L_1\ar[r]^-{h}&T_1\oplus T^1 \ar[r] & N \ar[r] & 0}$$ 
with $h$ an $\text{add}\,T$-approximation.  Applying $\text{Hom}_A(-, T)$ yields 

$$\xymatrix@R=5pt{0\ar[r]&\text{Hom}_A(N,T)\ar[r]& \text{Hom}_A(T_1\oplus T^1, T)\ar[r]^-{h^*}& \text{Hom}_A(L_1, T)\\
&\hspace{2cm}\ar[r]&\text{Ext}^1_A(N,T)\ar[r]&0\hspace{1.5cm}}$$
and since $h$ is an $\text{add}\,T$-approximation, the morphism $h^*$ is surjective.  Thus $\text{Ext}^1_A(N,T) = 0$.  

On the other hand, $T_1\oplus T^1$ generates $N$, so $N\in\text{Gen}\,T = \torT$, and thus $\text{Ext}^1_A(T,N)=0$.  But then both $\text{Ext}^1_A(T,N) = \text{Ext}^1_A(N,T)=0$ and we see that $N$ is a summand of $T$.  This is a contradiction to the assumption that $T_1$ is the last summand of $T$ on the ray $\omega$.  Thus $E_1\in\torT$.  

Therefore, in the almost split sequence (\ref{s2}), we have $L_1, E_1 \in \torT$ and $\tau T_1 \in \ftorT$.  Moreover, all predecessors of $\tau T_1$ on the ray $\tau \omega$ are also in $\ftorT$ because the morphisms on the ray are injective.   Since $\text{Hom}_A(T, -):\; \torT \to \mathcal{Y}(T)$ is an equivalence of categories, it follows that $\text{Hom}_A(T, L_1)$ has only one direct predecessor  
$$\text{Hom}_A(T, E_1)\to \text{Hom}_A(T, L_1)$$
in $\text{mod}\,C$ and this irreducible morphism is surjective.   The kernel of this morphism is $\text{Hom}_A(T, t(\tau_A L_1))$ where $t$ is the torsion radical.  Thus we get 
$$\tau_C\Omega_C I_C(i) = \tau_C \text{Hom}_A(T, L_1) = \text{Hom}_A(T, t(\tau_A L_1)).$$

We will show that $t(\tau_A L_1)$ lies on the mouth of $\mathcal{T}_A$ and this will complete the proof in case 1.  

Let $M$ be the indecomposable $A$-module on the mouth of $\mathcal{T}_A$ such that the ray starting at $M$ passes through $\tau_A L_1$.  Thus $M$ is the starting point of the ray $\tau^{2} \omega$.  Then there is a short exact sequence of the form 

\begin{equation}\label{s3}
\xymatrix{0\ar[r]& M \ar[r] & \tau_A L_1 \ar[r] & \tau_A T_1 \ar[r] & 0}
\end{equation}
with $\tau_A T_1\in \ftorT$.  We claim that $M\in \torT$.

Suppose to the contrary that $0\not=\text{Ext}^1_A(T,M)=D\text{Hom}_A(\tau^{-1}M, T)$.  Since $\tau^{-1}M$ lies on the mouth of $\mathcal{T}_A$, this implies that there is a direct summand $T^1$ of $T$ which lies on the ray $\tau \omega$ starting at $\tau^{-1}M$.  Since $T$ is tilting, $T^1$ cannot be a predecessor of $\tau T_1$ on this ray and since $L_1$ is not a summand of $T$, we also have $L_1\not=T^1$.  Thus $T^1$ is a successor of $L_1$ on the ray $\tau\omega$.  This is impossible since such a $T^1$ would satisfy $\text{Ext}^1_A(T^1, E_1)\not=0$ contradicting the fact that $E_1\in \torT$.  

Therefore, $M\in\torT$ and the sequence (\ref{s3}) is the canonical sequence for $\tau_A L_1$ in the torsion pair $(\torT, \ftorT)$.  This shows that $t(\tau_A L_1) = M$ and hence $\tau_C\Omega_C I_C(i) = \text{Hom}_A(T,M)$ as desired.  

\medskip
 Case 2.  Now suppose that $T_i$ does not lie on the mouth of $\mathcal{T}_A$.  Let $\omega_1$ denote the ray passing through $T_i$ and $\omega_2$ the coray passing through $T_i$.   Denote by $T_1$ the last summand of $T$ on $\omega_1$, by $T_2$ the last summand of $T$ on $\omega_2$, and by $L_j$ the direct predecessor of $T_j$ which does not lie on $\omega_j$.   Note that $L_2$ does not exist if $T_2$ lies on the mouth of $\mathcal{T}_A$, and in this case we let $L_2=0$.  Thus we have the following local configuration in $\mathcal{T}_A$.  
 
$$\xymatrix@!C=5pt@!R=5pt{M\ar[dr] &&&& && &&&  & L_2 \ar[dr] && \tau^{-1}L_2\\
&\ar@{..}[dr]&&&&&&&&\tau T_2\ar[ur]\ar[dr]\ar@{..}[dl]&& T_2\ar[ur]&&\\
&&\ar@{..}[dr]&&&&&&&&\ar[ur]\ar@{..}[dl]\\
&&&\ar@{..}[dr]&&\ar[dr]\ar@{..}[ur]&&\ar[dr]\ar@{..}[ur]&&&&&&&&\\
&&&&\tau^2 T_i\ar[ur]\ar[dr] && \tau T_i \ar[dr]\ar[ur] && T_i \ar[ur]\ar[dr]\\
&&&&&\ar[ur]\ar@{..}[dr]&&\ar[ur]\ar@{..}[dr]&&\ar@{..}[dr]&&\\
&&&&&&\ar[dr]&&\tau T_1\ar[dr] && T_1\ar[dr] \\
&&&&&&&\tau L_1 \ar[ur] && L_1\ar[ur] && \tau^{-1}L_1}$$ 

The injective $C$-module $I_C(i) = \text{Ext}^1_A(T, \tau T_i)$ is biserial with top $S(1)\oplus S(2)$.  Moreover, there is a short exact sequence 

$$\xymatrix{0\ar[r]&\tau T_i \ar[r] & L_1\oplus L_2 \oplus T_i \ar[r] & T_1\oplus T_2 \ar[r] & 0.}$$
Applying $\text{Hom}_A(T,-)$ yields the following exact sequence.  

\begin{equation}\label{s4}
\xymatrix@R=5pt{0\ar[r]&\text{Hom}_A(T, L_1\oplus L_2)\oplus P_C(i) \ar[r] & P_C(1)\oplus P_C(2) \ar[r]^-{f}&I_C(i)\\
&\hspace{3cm}\ar[r]&\text{Ext}^1_A(T, L_1\oplus L_2)\ar[r]&0.}
\end{equation}

By the same argument as in case 1, using that $T_1$ and $T_2$ are the last summands of $T$ on $\omega_1$ and $\omega_2$ respectively, we see that $\text{Ext}^1_A(T, L_1\oplus L_2)=0$.  Therefore, the sequence (\ref{s4}) is short exact.  Moreover, the morphism $f$ is a projective cover and thus 
$$\Omega_C I_C(i) = \text{Hom}_A(T, L_1\oplus L_2) \oplus P_C (i).$$  
Applying $\tau_C$ yields 
$$\tau_C\Omega_C I_C(i) = \tau_C \text{Hom}_A(T, L_1) \oplus \tau_C \text{Hom}_A(T, L_2).$$

By the same argument as in case 1 we see that 

$$\tau_C \text{Hom}_A (T, L_1) = \text{Hom}_A (T, t(\tau_A L_1)) = \text{Hom}_A(T, M)$$
where $M$ is the indecomposable $A$-module on the mouth of $\mathcal{T}_A$ such that the ray starting at $M$ passes through $\tau L_1$.  In other words, $M$ is the starting point of the ray $\tau^2 \omega$.  

Therefore, it only remains to show that $\tau_C \text{Hom}_A(T, L_2) = 0$.  To do so, it suffices to show that $L_2$ is a summand of $T$.  

We have already seen that $\text{Ext}^1_A(T, L_2)=0$.  We show now  that we also have $\text{Ext}^1_A(L_2, T)=0$.  Suppose the contrary.  Then there exists a non-zero morphism $u:\, T\to \tau_A L_2$. Composing it with the irreducible injective morphism $\tau_A L_2\to \tau_A T_2$ yields a non-zero morphism in $\text{Hom}_A(T, \tau_A T_2)$.  But this is impossible since $T$ is tilting.  

Thus we have $\text{Ext}^1_A(T, L_2)= \text{Ext}^1_A(L_2, T)=0$ and thus $L_2$ is a summand of $T$, the module $\text{Hom}_A(T, L_2)$ is projective and $\tau_C \text{Hom}_A (T, L_2)=0$.  This completes the proof.  
\end{proof}

\begin{remark}
The module $M$ in the statement of the lemma is the starting point of the ray passing through $\tau^2 T_i$.  
\end{remark}

\begin{cor}\label{cor5}
Let $A, T, C, \mathcal{T}_A$ be as in Lemma \ref{lem tube2}, and let $B = C \ltimes E$, with $E = \textup{Ext}^2_C(DC,C)$.  Let $X,Y$ be two modules lying on the same coray in the tube $\textup{Hom}_A(T, \mathcal{T}_A\cap \torT)$ in $\textup{mod}\,C$.  Then $X\otimes_C E \cong Y\otimes _C E$ and thus the two projections $X\otimes_C B \to X \to 0$ and $Y\otimes _C B \to Y \to 0$ have isomorphic kernels. 
\end{cor}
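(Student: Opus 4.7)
The plan is to first reduce the second assertion to the first via the bimodule decomposition $B\cong C\oplus E$. For any right $C$-module $Z$ this yields a natural splitting $Z\otimes_C B\cong Z\oplus (Z\otimes_C E)$ under which the canonical projection $Z\otimes_C B\to Z$ becomes the coordinate projection onto the first summand; hence its kernel is $Z\otimes_C E$, and the second assertion of the corollary follows from the first. The core task is therefore to produce an isomorphism $X\otimes_C E\cong Y\otimes_C E$.

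To analyse $Z\otimes_C E$ for $Z$ in the tube $\mathcal{T}_1=\Hom_A(T,\mathcal{T}_A\cap\mathscr{T}(T))$, I would invoke the cluster-tilting interpretation of $B$. Writing $Z=\Hom_A(T,Z_A)$ with $Z_A\in \mathcal{T}_A\cap\mathscr{T}(T)$, the induction functor $-\otimes_C B$ agrees with $\Hom_{\mathcal{C}_A}(T,-)$ on this subcategory. Using $\mathcal{C}_A=\mathcal{D}^b(\textup{mod}\,A)/F$ with $F=\tau^{-1}[1]$ and splitting the orbit sum, one obtains
\[
Z\otimes_C B\;\cong\;\Hom_A(T,Z_A)\,\oplus\,\Hom_{\mathcal{D}^b(\textup{mod}\,A)}(T,FZ_A),
\]
so $Z\otimes_C E\cong \Hom_{\mathcal{D}^b(\textup{mod}\,A)}(T,FZ_A)$. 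Applying Serre duality in $\mathcal{D}^b(\textup{mod}\,A)$ (with Serre functor $\tau[1]$) and using that $\tau$ is an autoequivalence on the tube-regular summands of $T$, this rewrites as $Z\otimes_C E\cong D\Hom_A(Z_A,\tau^2 T)$.

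For $X,Y$ on the same coray in $\mathcal{T}_1$, their preimages $X_A,Y_A$ lie on a common coray in $\mathcal{T}_A$. I would argue by induction along the coray, reducing to the case in which $Y_A$ is an immediate coray successor of $X_A$, and use the short exact sequence $0\to N\to X_A\to Y_A\to 0$ whose kernel $N$ is a quasi-simple module on the mouth of $\mathcal{T}_A$. Applying $\Hom_A(-,\tau^2 T_j)$ for each summand $T_j$ of $T$, the required isomorphism $\Hom_A(X_A,\tau^2 T_j)\cong \Hom_A(Y_A,\tau^2 T_j)$ then follows once the boundary terms are controlled. Lemma~\ref{lem tube2} pins down the location of each $\tau^2 T_j$ lying in $\mathcal{T}_A$ (on a specific ray emanating from a mouth module $M_j$), while summands of $T$ outside $\mathcal{T}_A$ contribute zero on both sides, their $\tau^2$-translates being either postprojective or in a different tube and hence admitting no morphism from the tube-regular modules $X_A$ and $Y_A$.

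The principal obstacle is this final geometric comparison: verifying the vanishing of $\Hom_A(N,\tau^2 T_j)$ together with the associated $\Ext^1$ term, for every relevant $j$, so that the connecting maps in the long exact sequence are zero and $\Hom_A(X_A,\tau^2 T_j)\to \Hom_A(Y_A,\tau^2 T_j)$ becomes an isomorphism. This rests on the precise identification of $M_j$ provided by Lemma~\ref{lem tube2} together with the wing description of Hom spaces in a stable tube, and is the step where the combinatorial geometry of the tube enters non-trivially.
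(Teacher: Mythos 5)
Your opening reduction via $B=C\oplus E$ is fine, and your identification of $X\otimes_C E$ through the cluster category is a correct alternative to the one the paper uses: the paper instead quotes \cite[Propositions 3.3 and 4.1]{SS} to get $X\otimes_C E\cong D\Hom_C(X,DE)\cong D\Hom_C(X,\tau_C\Omega_C DC)$, and then the corollary falls out of Lemma \ref{lem tube2} almost immediately, because every summand of $\tau_C\Omega_C DC$ lying in the tube lies on its \emph{mouth} in $\textup{mod}\,C$, and $\Hom_C(X,M)$ for $M$ on the mouth is $k$ or $0$ according to whether $M$ terminates the coray through $X$. Your route stays in $\textup{mod}\,A$ and must therefore establish the coray-invariance of $\Hom_A(-,\tau^2T_j)$ by hand, and this is where the proposal breaks down.

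There are two concrete gaps. First, the reduction to short exact sequences $0\to N\to X_A\to Y_A\to 0$ with $N$ quasi-simple on the mouth of $\mathcal{T}_A$ is not available: consecutive modules on a coray of the tube $\Hom_A(T,\mathcal{T}_A\cap\torT)$ in $\textup{mod}\,C$ need not have preimages that are consecutive on a single coray of $\mathcal{T}_A$. The corays of interest pass through projective $C$-modules (that is the whole point of $\mathcal{S}_1$), the tube in $\textup{mod}\,C$ is a ray-inserted tube whose rank differs from that of $\mathcal{T}_A$, and at a projective the irreducible map along the coray is not even an epimorphism (in the paper's example, $\begin{smallmatrix}3\\1\\5\end{smallmatrix}\to\begin{smallmatrix}2\\3\\1\\5\end{smallmatrix}$ increases dimension). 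Controlling this mismatch is precisely what most of the proof of Lemma \ref{lem tube2} is devoted to (the identification of the relevant kernel as $\Hom_A(T,t(\tau_A L_1))$ with $t(\tau_A L_1)$ on the mouth). Second, even where your exact sequence does exist, the vanishing you propose to verify, $\Hom_A(N,\tau^2T_j)=0$, is false in general: it fails whenever $\tau^2T_j$ lies on the ray starting at $N$. What is true, and what you would actually need, is that the restriction map $\Hom_A(X_A,\tau^2T_j)\to\Hom_A(N,\tau^2T_j)$ vanishes. This does hold: a map $X_A\to\tau^2T_j$ not killing the quasi-socle $N$ of $X_A$ is a monomorphism in the tube, so $X_A/N$, having the same quasi-socle as $\tau T_j$ and smaller quasi-length, embeds into $\tau T_j$, whence $0\neq\Hom_A(X_A,\tau T_j)\cong D\Ext^1_A(T_j,X_A)$, contradicting $X_A\in\torT$. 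Since you explicitly leave this key step open and the two auxiliary claims it rests on are incorrect as stated, the proposal does not yet constitute a proof, although the mod-$A$ strategy could in principle be repaired along the lines just indicated.
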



\begin{proof}
For all $C$-modules $X$ we have 
$$X\otimes _B E \cong D\text{Hom} (X, DE) \cong D\text{Hom}(X, \tau_C\Omega_C DC)$$
where the first isomorphism is \cite[Proposition 3.3]{SS} and the second is \cite[Proposition 4.1]{SS}.  Since $T$ has no preinjective summands, and $X$ is regular, the only summand of $\tau\Omega DC$ for which $\text{Hom}(X, \tau\Omega DC)$ can be nonzero, must lie in the same tube as $X$.  By the lemma, the only summands of $\tau\Omega DC$ in the tube lie on the mouth of the tube.  Let $M$ denote an indecomposable $C$-module on the mouth of a tube.  Then 

$$\text{Hom}_C(X,M) \cong \text{Hom}_C (Y,M) \cong \left \{ \begin{array}{ll}k & \text{if }\, M \, \text{ lies on the coray passing}\\
&\text{ through } \, X \text{ and }\, Y, \\
0 & \text{otherwise.}\end{array}\right.$$
\end{proof}

We summarize the results of this section in the following proposition.
\begin{prop}\label{prop5}
 \begin{itemize}
\item [\textup{(a)}] Let $\cals_1$ be the coray in $\zG(\textup{mod}\,C_1)$ passing through the projective $C_1$-module corresponding to the root projective $P_B(i)$ Then $\cals_1\otimes_{C_1} B$ is a coray in $\zG(\textup{mod}\,B)$ passing through $P_B(i)$. Furthermore all modules in $\cals_1\otimes_{C_1} B $ are extensions of modules of $\cals_1$ by the same module $P_{C_1}(i)\otimes E$.
\item [\textup{(b)}] Let $\cals_2$ be the ray in $\zG(\textup{mod}\,C_2)$ passing through the injective $C_2$-module corresponding to the root injective $I_B(i)$ Then $\Hom_{C_2}(B,\cals_2)$ is a ray in $\zG(\textup{mod}\,B)$ passing through $I_B(i)$. Furthermore all modules in $\Hom_{C_2}(B,\cals_2)$ are extensions of modules of $\cals_2$ by the same module $\Hom_{C_2}(E,I_{C_2}(i))$.

\end{itemize}
\end{prop}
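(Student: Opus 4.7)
The plan is to derive Proposition \ref{prop5} directly from the results established earlier in this section. The first assertions in parts (a) and (b) are precisely the content of Lemma \ref{lem tube1}(a) and (b), so only the extension-by-the-same-module statements require fresh work.

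For the second assertion of (a), I would begin with the defining short exact sequence of the trivial extension $B=C_1\ltimes E$, viewed as a sequence of $C_1$-bimodules:
$$0\to E\to B\to C_1\to 0.$$
Tensoring with an indecomposable $X\in\cals_1$ gives a short exact sequence of $B$-modules
$$0\to X\otimes_{C_1}E\to X\otimes_{C_1}B\to X\to 0,$$
which exhibits $X\otimes_{C_1}B$ as an extension of $X$ by $X\otimes_{C_1}E$. Since every such $X$ lies on the same coray $\cals_1$ as the projective $P_{C_1}(i)$, Corollary \ref{cor5} yields $X\otimes_{C_1}E\cong P_{C_1}(i)\otimes_{C_1}E$, and the claim follows.

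For the second assertion of (b), I would proceed dually. Applying $\Hom_{C_2}(-,Y)$ to the analogous bimodule sequence $0\to E\to B\to C_2\to 0$, and using that $\Hom_{C_2}(C_2,Y)\cong Y$ while $\Ext^1_{C_2}(C_2,Y)=0$, one obtains a short exact sequence of $B$-modules
$$0\to Y\to\Hom_{C_2}(B,Y)\to\Hom_{C_2}(E,Y)\to 0.$$
It then remains to establish the dual of Corollary \ref{cor5}: for $Y$ ranging over the ray $\cals_2$, the module $\Hom_{C_2}(E,Y)$ depends only on the ray, and hence equals $\Hom_{C_2}(E,I_{C_2}(i))$ throughout $\cals_2$. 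I would obtain this by mimicking the proof of Corollary \ref{cor5}, replacing the identification $X\otimes_{C_1}E\cong D\Hom_{C_1}(X,\tau\Omega DC_1)$ with its dual and invoking the analogue of Lemma \ref{lem tube2} in which the roles of rays and corays, and of projectives and injectives, are interchanged throughout. The main obstacle is precisely in spelling out this dualization, since Lemma \ref{lem tube2} is proved by a delicate case analysis that must be rewritten in the dual setting before the conclusion can be transferred.
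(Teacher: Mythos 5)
Your proposal is correct and follows essentially the same route as the paper: part (a) is obtained by combining Lemma \ref{lem tube1} with Corollary \ref{cor5} via the (split, hence exact after tensoring) sequence $0\to E\to B\to C_1\to 0$, which is exactly the paper's argument. Your explicit handling of (b) --- dualizing Lemma \ref{lem tube2} and Corollary \ref{cor5} for rays and coinduction --- is in fact more careful than the paper, whose proof only addresses (a) and leaves the dual statements entirely implicit.
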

\begin{proof}
 (a) The first statement is Lemma \ref{lem tube1}, and the second statement is a restatement of  Corollary \ref{cor5}. 
\end{proof}
\begin{example}
Let $B$ be the cluster-tilted algebra given by the quiver 

$$\xymatrix@C=15pt@R=15pt{1\ar@<2pt>[rr]^{\lambda}\ar@<-2pt>[rr]_{\beta}&&5\ar[dl]^{\epsilon}\\
&3\ar[ul]^{\alpha}\ar[dr]^{\delta}\\
2\ar[ur]^{\gamma}&&4\ar[ll]^{\sigma}}$$
bound by $\alpha\beta=0, \beta\epsilon=0, \epsilon\alpha=0, \gamma\delta=0, \sigma\gamma=0, \delta\sigma=0$.  The algebras $C_1$ and $C_2$ are respectively given by the quivers 

$$\xymatrix@C=15pt@R=15pt{1\ar@<2pt>[rr]^{\lambda}\ar@<-2pt>[rr]_{\beta}&&5&&&& 1\ar@<2pt>[rr]^{\lambda}\ar@<-2pt>[rr]_{\beta}&&5\ar[dl]^{\epsilon}  \\
&3\ar[dr]^{\delta} \ar[ul]^{\alpha}&&&\text{and} & && 3\ar[dr]^{\delta}\\
2\ar[ur]^{\gamma}&&4&&&& 2\ar[ur]^{\gamma}&&4}$$
with the inherited relations.  We can see the tube in $\Gamma(\textup{mod}\, C_{1})$ below and the coray passing through the root projective $P_{C_1}(3) = \begin{smallmatrix}3\\4\;1\\\;\;\;5\end{smallmatrix}$ is given by 

$$  \xymatrix{\mathcal{S}_1: &\dots\ar[r]&  {\begin{smallmatrix}1\\5\end{smallmatrix}}\ar[r]&{\begin{smallmatrix}3\\4\;1\\\;\;\;5\end{smallmatrix}}\ar[r] &{\begin{smallmatrix}3\\1\\5\end{smallmatrix}}\ar[r]&{\begin{smallmatrix}2\\3\\1\\5\end{smallmatrix}}.}$$

$$\xymatrix@!C=15pt@!R=15pt{&&&&&& {\begin{smallmatrix}4\end{smallmatrix}}\ar[dr]\ar@{--}[dd]\\
&&&&&&&{\begin{smallmatrix}3\\4\;1\\\;\;\;5\end{smallmatrix}}\\
&&&& {\begin{smallmatrix}2\\3\\1\\5\end{smallmatrix}}\ar[dr]&&{\begin{smallmatrix}1\\5\end{smallmatrix}}\ar[ur]\ar@{--}[d]\\
&{\begin{smallmatrix}4\end{smallmatrix}}\ar[dr]\ar@{--}[dd]&&{\begin{smallmatrix}3\\1\\5\end{smallmatrix}}\ar[ur]\ar[dr]&&{\begin{smallmatrix}2\;\;\\3\;\;\\1\;1\\\;5\;5\end{smallmatrix}}\ar[ur]&&\\
&&{\begin{smallmatrix}3\\4\;1\\\;\;\;5\end{smallmatrix}}\ar[ur]\ar[dr]&&{\begin{smallmatrix}3\\1\;1\\\;5\;5\end{smallmatrix}}\ar[ur]&&\\
&{\begin{smallmatrix}1\\5\end{smallmatrix}}\ar[ur]\ar[dr]\ar@{--}[d]&&{\begin{smallmatrix}3\\4\;1\;1\\\;\;\;\;5\;5\end{smallmatrix}}\ar[ur]\\
\ar[ur]&&{\begin{smallmatrix}1\;1\\\;5\;5\end{smallmatrix}}\ar[ur]}$$

Dually, the ray in $\Gamma(\textup{mod}\, C_2)$ passing through the root injective $I_{C_2}(3) = {\begin{smallmatrix}1\;\;\;\\5\;2\\3\end{smallmatrix}}$ is given by 

$$\xymatrix{\mathcal{S}_2: & {\begin{smallmatrix}1\\5\\3\\4\end{smallmatrix}}\ar[r]&{\begin{smallmatrix}1\\5\\3\end{smallmatrix}}\ar[r]&{\begin{smallmatrix}1\;\;\;\\5\;2\\3\end{smallmatrix}}\ar[r]&{\begin{smallmatrix}1\\5\end{smallmatrix}}\ar[r]&\dots }$$

The root projective $P_B(3)$ lies on the coray 

$$\xymatrix{\mathcal{S}_1\otimes_{C_1} B: & \dots\ar[r]&  {\begin{smallmatrix}1\\5\\3\\4\end{smallmatrix}}\ar[r]&{\begin{smallmatrix}3\\4\;1\\\;\;\;5\\\;\;\;3\\\;\;\;4\end{smallmatrix}}\ar[r] &{\begin{smallmatrix}3\\1\\5\\3\\4\end{smallmatrix}}\ar[r]&{\begin{smallmatrix}2\\3\\1\\5\\3\\4\end{smallmatrix}}}$$

and the root injective $I_B(3)$ lies on the ray 

$$\xymatrix{\textup{Hom}_{C_2}(B, \mathcal{S}_2): & {\begin{smallmatrix}2\\3\\1\\5\\3\\4\end{smallmatrix}}\ar[r]&{\begin{smallmatrix}2\\3\\1\\5\\3\end{smallmatrix}}\ar[r]&{\begin{smallmatrix}2\;\;\;\\3\;\;\;\\1\;\;\;\\5\;2\\3\end{smallmatrix}}\ar[r]&{\begin{smallmatrix}2\\3\\1\\5\end{smallmatrix}}\ar[r]&\dots }$$

Note that by Proposition~\ref{prop5}, every module in $\mathcal{S}_1\otimes _{C_1} B$ is an extension of a module in $\mathcal{S}_1$ by $\begin{smallmatrix}3\\4\end{smallmatrix}$.  Similarly, every module in $\textup{Hom}_{C_2}(B, \mathcal{S}_2)$ is an extension of a module in $\mathcal{S}_2$ by  $\begin{smallmatrix}2\\3\end{smallmatrix}$. 

Applying the knitting algorithm we obtain the tube in $\Gamma(\textup{mod}\,B)$ containing both $\mathcal{S}_1\otimes_{C_1} B$ and $\textup{Hom}_{C_2}(B, \mathcal{S}_2)$.  

$$\xymatrix@!C=15pt@!R=15pt{
{\begin{smallmatrix}4\\2\end{smallmatrix}}\ar[dr]\ar@{--}[dd]&&\circ&&{\begin{smallmatrix}2\\3\\1\\5\\3\\4\end{smallmatrix}}\ar[dr]&&\circ&&{\begin{smallmatrix}4\\2\end{smallmatrix}}\ar@{--}[dd]\\
&{\begin{smallmatrix}4\end{smallmatrix}}\ar[dr]&&{\begin{smallmatrix}3\\1\\5\\3\\4\end{smallmatrix}}\ar[dr]\ar[ur]&&{\begin{smallmatrix}2\\3\\1\\5\\3\end{smallmatrix}}\ar[dr]&&{\begin{smallmatrix}2\end{smallmatrix}}\ar[ur]\\
\circ \ar@{--}[dd]&&{\begin{smallmatrix}3\\4\;1\\\;\;\;5\\\;\;\;3\\\;\;\;4\end{smallmatrix}}\ar[ur]\ar[dr]&&{\begin{smallmatrix}3\\1\\5\\3\end{smallmatrix}}\ar[ur]\ar[dr]&&{\begin{smallmatrix}2\;\;\;\\3\;\;\;\\1\;\;\;\\5\;2\\3\end{smallmatrix}}\ar[ur]\ar[dr]&&\circ\ar@{--}[dd]\\
&{\begin{smallmatrix}1\\5\\3\\4\end{smallmatrix}}\ar[ur]\ar[dr]&&{\begin{smallmatrix}3\\4\;1\\\;\;\;\;5\\\;\;\;3\end{smallmatrix}}\ar[ur]\ar[dr]&&{\begin{smallmatrix}3\;\;\;\\1\;\;\;\\5\;2\\3\end{smallmatrix}}\ar[ur]\ar[dr]&&{\begin{smallmatrix}2\\3\\1\\5\end{smallmatrix}}\ar[dr]\\
{\begin{smallmatrix}2\;\;\;\\3\;\;\;\\\;1\;1\\\;\;5\;5\\\;\;\;\;\;3\\\;\;\;\;\;4\end{smallmatrix}}\ar[ur]\ar[dr]\ar@{--}[d]&&{\begin{smallmatrix}1\\5\\3\end{smallmatrix}}\ar[ur]\ar[dr]&&{\begin{smallmatrix}3\;\;\;\\4\;1\\\;\;\;5\;2\\\;\;\;\;\;\;3\end{smallmatrix}}\ar[ur]\ar[dr]&&{\begin{smallmatrix}3\\1\\5\end{smallmatrix}}\ar[ur]\ar[dr]&&{\begin{smallmatrix}2\;\;\;\\3\;\;\;\\\;1\;1\\\;\;5\;5\\\;\;\;\;\;3\\\;\;\;\;\;4\end{smallmatrix}}\ar@{--}[d]\\
&\ar[ur]&&\ar[ur]&&\ar[ur]&&\ar[ur]&}$$

\end{example}

\section{From cluster-tilted algebras to quasi-tilted algebras}

Let $B$ be cluster-tilted of euclidean type $Q$ and let $A=kQ$. Then there exists $T\in \calc_A$ tilting such that $B=\End_{\calc_A}T$.

Because $Q$ is euclidean, $\calc_A$ contains at most 3 exceptional tubes.
Denote by $T_{0},T_1,T_2,T_3$ the direct sums of those summands of $T$ that respectively lie in the transjective component and in the three exceptional tubes.

In the derived category $\DA$, we can choose a lift of $T$ such that we have the following local configuration.
\smallskip

\begin{center}
\scalebox{.65}{ 
\begingroup%
  \makeatletter%
  \providecommand\color[2][]{%
    \errmessage{(Inkscape) Color is used for the text in Inkscape, but the package 'color.sty' is not loaded}%
    \renewcommand\color[2][]{}%
  }%
  \providecommand\transparent[1]{%
    \errmessage{(Inkscape) Transparency is used (non-zero) for the text in Inkscape, but the package 'transparent.sty' is not loaded}%
    \renewcommand\transparent[1]{}%
  }%
  \providecommand\rotatebox[2]{#2}%
  \ifx\svgwidth\undefined%
    \setlength{\unitlength}{519.40868029bp}%
    \ifx\svgscale\undefined%
      \relax%
    \else%
      \setlength{\unitlength}{\unitlength * \real{\svgscale}}%
    \fi%
  \else%
    \setlength{\unitlength}{\svgwidth}%
  \fi%
  \global\let\svgwidth\undefined%
  \global\let\svgscale\undefined%
  \makeatother%
  \begin{picture}(1,0.2164963)%
    \put(0,0){\includegraphics[width=\unitlength]{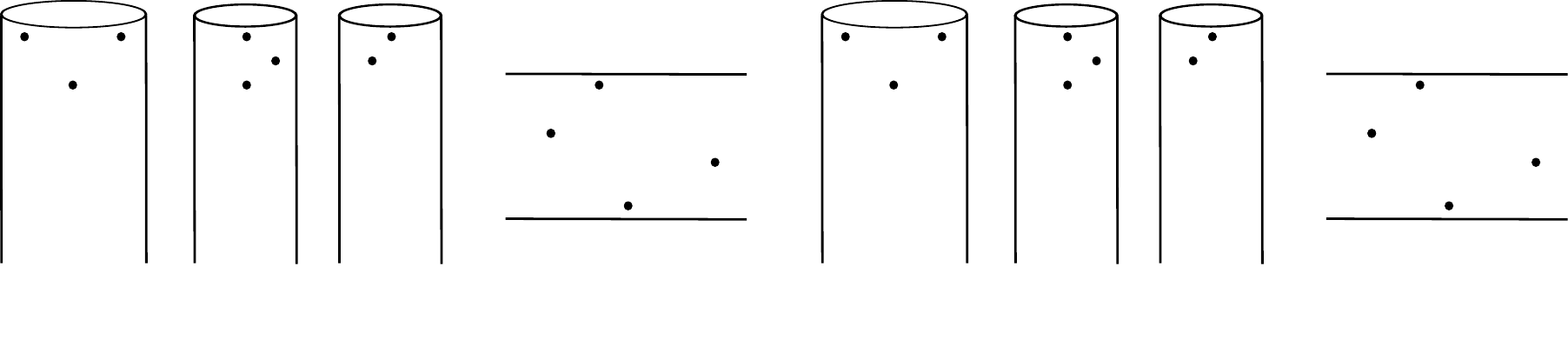}}%
    \put(0.04261005,0.00440254){\color[rgb]{0,0,0}\makebox(0,0)[lb]{\smash{$T_1$}}}%
    \put(0.14734454,0.00440254){\color[rgb]{0,0,0}\makebox(0,0)[lb]{\smash{$T_2$}}}%
    \put(0.23667689,0.00440254){\color[rgb]{0,0,0}\makebox(0,0)[lb]{\smash{$T_3$}}}%
    \put(0.38145691,0.00440254){\color[rgb]{0,0,0}\makebox(0,0)[lb]{\smash{$T_{0}$}}}%
    \put(0.5570412,0.00440254){\color[rgb]{0,0,0}\makebox(0,0)[lb]{\smash{$FT_1$}}}%
    \put(0.66485611,0.00440254){\color[rgb]{0,0,0}\makebox(0,0)[lb]{\smash{$FT_2$}}}%
    \put(0.75726889,0.00440254){\color[rgb]{0,0,0}\makebox(0,0)[lb]{\smash{$FT_3$}}}%
    \put(0.90512933,0.00440254){\color[rgb]{0,0,0}\makebox(0,0)[lb]{\smash{$FT_{0}$}}}%
  \end{picture}%
\endgroup%
 }
\end{center}

Let $\calh$ be a hereditary category that is derived equivalent to $\textup{mod}\,A$ and such that $\calh$ is not the module category of a hereditary algebra. Then $\calh$ is of the form $\calh=\calt^-\vee\calc\vee\calt^+$, where $\calt^-, \calt^+$ consist of tubes, and $\calc$ is a transjective component, see \cite{LenzingSkowronski}.
Let $T_-$, $T_+$  be the direct sum of all indecomposable summands of $T$ lying in $\calt^-$,  $\calt^+ $ respectively.
We define two subspaces $L$ and $R$ of $B$ as follows.
\[L=\Hom_{\DA}(F^{-1}T_+,T_{0}) \quad \textup{and} \quad R=\Hom_{\DA}(T_{0},FT_-).\]

The transjective component of $\textup{mod}\,B$ contains a left section $\zS_L$ and a right section $\zS_R$, see \cite{Assem}. Thus $\zS_L,\zS_R$ are local slices,  $\zS_L$  has no projective predecessors, and $\zS_R $ has no projective successors in the transjective component.
Define $K$ to be the two-sided ideal of $B$ generated by $ \Ann\, \zS_L\cap\Ann\, \zS_R $ and the two subspaces $L$ and $R$. Thus
\[K=\langle \Ann\, \zS_L\cap\Ann\, \zS_R , L, R\rangle.\]
We call $K$ the {\em partition ideal} induced by the partition $\calt^-\vee \calc\vee\calt^+$.
\begin{thm}
 \label{thm ctaqt}
 The algebra $C=B/K$ is quasi-tilted and such that $B=\Ctilde$. Moreover $C$ is tilted if and only if $L=0$ or $R=0$.
\end{thm}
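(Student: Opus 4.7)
The plan is to identify $B/K$ with the quasi-tilted algebra $\End_\calh T$ and then to appeal to Theorem \ref{thm 2.1} for the relation-extension claim. Using Happel's theorem, we get a triangle equivalence $\DH \simeq \DA$ that induces an equivalence of cluster categories $\calc_\calh \simeq \calc_A$ preserving $T$, so $B \cong \End_{\calc_\calh} T$. Since $\calh$ is hereditary, only the $F^0$ and $F^1$ summands of $\Hom_{\calc_\calh}(T,T)$ contribute, giving
\[
B \;\cong\; \End_\calh T \,\oplus\, \Hom_{\DH}(T, FT).
\]
Setting $C' := \End_\calh T$ and $E := \Hom_{\DH}(T, FT)$, the calculation in the proof of Theorem \ref{thm 2.1} shows $E$ is a two-sided nilpotent ideal of $B$, that $B/E \cong C'$, and (since $C'$ is quasi-tilted) that its relation-extension is exactly $B$. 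The theorem therefore reduces to proving $K = E$.

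To establish $K = E$, I would decompose $E$ according to the splitting $T = T_- \oplus T_0 \oplus T_+$:
\[
E \;=\; \bigoplus_{i,j \in \{-,0,+\}} \Hom_{\DH}(T_i, FT_j).
\]
The pieces $\Hom_{\DH}(T_+, FT_0)$ and $\Hom_{\DH}(T_0, FT_-)$ coincide with $L$ and $R$ after shifting $F$ across the $\Hom$, so $L, R \subseteq E$. The remaining pieces — cross morphisms among the tubes, and $F$-morphisms $T_0 \to FT_0$ internal to the transjective component — would be shown to annihilate both local slices $\zS_L$ and $\zS_R$: these slices lie inside the transjective component of $\Gamma(\text{mod}\,B)$, and any factorization through the $F$-shift forces a passage outside the supports of $\zS_L$ or $\zS_R$. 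Thus $E \subseteq K$. For the reverse inclusion, one needs $\Ann\,\zS_L \cap \Ann\,\zS_R \subseteq E$; this follows from the fact that $B/\Ann\,\zS_L$ and $B/\Ann\,\zS_R$ are the two tilted quotients of $B$ attached to the extreme slices, and together their kernels intersect $C'$ trivially, since no nonzero element of $C'$ can act as zero on both extreme slices simultaneously. Combining gives $K = E$, so $C = B/K \cong C'$ is quasi-tilted and $\widetilde{C} = B$.

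For the tilted characterization: $C = \End_\calh T$ is tilted if and only if $\calh$ is equivalent to the module category of a hereditary algebra. Using the structure of hereditary categories from \cite{LenzingSkowronski} cited earlier, this occurs exactly when the decomposition $\calh = \calt^- \vee \calc \vee \calt^+$ can be rewritten with one of the tubular wings absorbed on the opposite side of the transjective part — i.e., precisely when the cross $F$-morphisms in one direction vanish, which is $L = 0$ or $R = 0$. The "only if" direction is immediate from the absence of $F$-morphisms crossing the transjective component in a hereditary module category; the "if" direction requires producing an explicit reorientation (for instance via reflections as introduced in the previous section) that realizes $\calh$ as a hereditary module category of the appropriate type.

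The central technical difficulty is the support argument underlying the identification $K = E$: verifying that every cross summand of $E$ outside $L$ and $R$ annihilates both $\zS_L$ and $\zS_R$, and simultaneously that no nonzero element of $C'$ is annihilated by both. This hinges on precise control over the positions of the two extreme slices within the transjective component of $\Gamma(\text{mod}\,B)$ and on the explicit action of $F$-morphisms on the transjective versus the tubular summands of $T$.
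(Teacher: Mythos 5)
Your strategy for the first assertion is essentially the paper's: identify $B/K$ with $\End_\calh T$ by showing that $K$ coincides with the relation bimodule $E=\Hom_{\DH}(T,FT)$ of the quasi-tilted algebra $\End_\calh T$, then invoke Theorem~\ref{thm 2.1}. But the step you yourself flag as ``the central technical difficulty'' is exactly where the paper's proof does its work, and you have not supplied it. Concretely, $K=E$ needs (i) each of $\Hom(T_-,FT_-)$, $\Hom(T_0,FT_0)$, $\Hom(T_+,FT_+)$ to lie in $\Ann\,\zS_L\cap\Ann\,\zS_R$, and (ii) $\Ann\,\zS_L\cap\Ann\,\zS_R$ to meet the degree-zero part $\End_\calh T$ trivially. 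Your heuristic for (ii) --- ``no nonzero element of $C'$ acts as zero on both extreme slices'' --- is delicate and unproved: for instance $\Hom_\calh(T_-,T_0)\subseteq C'$ \emph{is} contained in $\Ann\,\zS_L$, so the claim genuinely depends on the intersection with $\Ann\,\zS_R$ and on knowing that these annihilators decompose as direct sums of full $\Hom$-pieces. The paper settles (i) and (ii) simultaneously by computing both annihilators explicitly as the morphisms of $\End_{\calc_A}T$ factoring through the lifts of the respective slices, e.g.\ $\Ann\,\zS_L\cong\Hom_{\cald}(F^{-1}T_0\oplus F^{-1}T_+\oplus T_-,\ T_0\oplus T_+\oplus FT_-)$, and reading off the intersection. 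Without some such explicit computation the identification $K=E$ is incomplete.

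The tilted characterization is where your route actually fails. The criterion ``$C=\End_\calh T$ is tilted if and only if $\calh$ is equivalent to the module category of a hereditary algebra'' is false in the ``only if'' direction: a tilted (even hereditary) algebra can be realized as $\End_\calh T$ for a hereditary category $\calh$ that is not a module category (the Kronecker algebra as the endomorphism algebra of a tilting sheaf on $\mathbb{P}^1$ is the standard example), and in the setup of this theorem $\calh$ has been fixed to \emph{not} be a module category, so tiltedness of $C$ cannot be read off from the shape of $\calh$. The correct mechanism, and the one the paper uses, is the characterization from \cite{ABS2}: $C$ is tilted if and only if $K$ is the annihilator of a local slice in $\textup{mod}\,B$. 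If $R=0$ one checks $T_-=0$ and then $K=\Ann\,\zS_L$, giving tiltedness at once; conversely, if $K=\Ann\,\zS'$ for some local slice $\zS'$, an argument with arrows in $\Hom_\cald(T_0,T_0)$ forces $K=\Ann\,\zS_L$ or $K=\Ann\,\zS_R$, whence $R=0$ or $L=0$. Your proposed ``reorientation via reflections'' does not address this and should be replaced by that argument.
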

\begin{proof}
 We have
 $B=\End_{\calc_A} T =\oplus_{i\in\ZZ}\Hom_{\DA}(T,F^iT)$, where the last equality is as $k$-vector spaces. 
 Using the decomposition $T=T_-\oplus T_0 \oplus T_+$, we see that $B$ is equal to
\[ 
\begin{array}
 {ccccccccccc}
 & \Hom_{\cald}(T_-,T_-) &\oplus& \Hom_{\cald}(T_-,T_0) &\oplus&\Hom_{\cald}(T_-,FT_-) \\
  \oplus& \Hom_{\cald}(T_0,T_0) &\oplus& \Hom_{\cald}(T_0,T_+)&\oplus& \Hom_{\cald}(T_0,FT_-) \\
   \oplus& \Hom_{\cald}(T_0,FT_0)
     &\oplus& \Hom_{\cald}(F^{-1}T_+,FT_0)&\oplus& \Hom_{\cald}(F^{-1}T_+,T_+)
\\
\oplus& \Hom_{\cald}(T_+,T_+),
\end{array}
\] 
where all Hom spaces are taken in $\DA$. On the other hand,
\[ 
\begin{array}
 {ccccccccccc}
 \End_{\calh} T&=& \Hom_{\calh}(T_-,T_-) &\oplus& \Hom_{\calh}(T_-,T_0) &\oplus&\Hom_{\calh} (T_0,T_0)\\ &\oplus& \Hom_{\calh}(T_0,T_+)&\oplus& \Hom_{\calh}(T_+,T_+)
\end{array}
\] 
is a quasi-tilted algebra. Thus in order to prove that $C$ is quasi-tilted it suffices to show that $K$ is the ideal generated by 
\[\Hom_{\cald}(T_-,FT_-) \oplus \Hom_{\cald}(T_0,FT_-\oplus FT_0) 
\oplus \Hom_{\cald}(F^{-1}T_+,T_0\oplus T_+). 
\]
But this follows from the definition of $L$ and $R$ and the fact that the annihilators of the local slices $\zS_L$ and $\zS_R$ are given by the morphisms in $\End_{\calc_A}T$ that factor through the lifts of the corresponding local slice in the cluster category. More precisely,
\[\begin{array}{rcl}\Ann \,\zS_L &\cong&\Hom_{\cald}(F^{-1} T_0\oplus F^{-1}T_+\oplus T_- \ ,\  T_0\oplus T_+ \oplus FT_-),\\
\Ann \,\zS_R &\cong&\Hom_{\cald}( F^{-1}T_+\oplus T_-\oplus T_0 \ ,\  T_+ \oplus FT_-\oplus FT_0),
\end{array}\]
and thus
\[\begin{array}{rcl}\Ann \,\zS_L\cap\Ann\,\zS_R &\cong&\Hom_{\cald}( T_0,F  T_0)\oplus \Hom_{\cald}( T_-,F  T_-)\\&&\ \oplus \Hom_{\cald}( F^{-1}T_+,T_+), 
\end{array}\]
where we used the fact that $\Hom_{\cald}(T_-,T_+)=\Hom_{\cald}(T_+,T_-)=0$. This completes the proof that $C$ is quasi-tilted.

Since $C=\End_\calh T$, we have $\Ctilde=\End_{\calc_\calh}T\cong\End_{\calc_A}T=B.$

Now assume that $R=0$. Then $T_-=0$ and thus $K$ is generated by
$(\Ann\,\zS_L\cap\Ann\,\zS_R)\oplus L$, and this is equal to 
\begin{equation}\label{eq ctaqt} \Hom_\cald(T_0,FT_0)\oplus \Hom_\cald(F^{-1}T_+,T_+)\oplus \Hom_\cald(F^{-1}T_+,FT_0).
\end{equation}
On the other hand, $T_-=0$ implies that 
\[\Ann\,\zS_L = \Hom_\cald(F^{-1}T_0\oplus F^{-1}T_+,T_0\oplus T_+),\]
and since $\Hom_\cald(F^{-1}T_0,T_+)=0$, this implies that
$K=\Ann\,\zS_L$ is the annihilator of a local slice. Therefore $C=B/K$ is tilted by \cite{ABS2}.
The case where $L=0$ is proved in a similar way. 

Conversely, assume $C$ is tilted. Then $K=\Ann\,\zS'$ for some local slice $\zS'$ in $\textup{mod}\,B$. We   show that  $K=\Ann\,\zS_L$ or $K=\Ann\,\zS_R$. Suppose to the contrary that $\zS'$ has both a predecessor and a successor in $\add\, T_0$. Then there exists an arrow $\za$ in the quiver of $B$ such that $\za\in\Hom_\cald(T_0,T_0)$ and $\za\in\Ann\,\zS'=K$. But by definition of $\zS_L,\zS_R,L $ and $R$, we see that this is impossible. 

Thus  $K=\Ann\,\zS_L$ or $K=\Ann\,\zS_R$. In the former case, we have $R=0$, by the computation (\ref{eq ctaqt}), and in the latter case, we have $L=0$.
 \end{proof}

\begin{thm}
 \label{thm ctaqt2}
 If $C$ is quasi-tilted of euclidean type and $B=\Ctilde$ then
 \[C=B/\Ann(\zS^-\oplus \zS^+),\]
 where $\zS^- $ is a right section in the postprojective component of $C$ and $\zS^+$ is a left section in the preinjective component.
\end{thm}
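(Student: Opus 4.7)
The plan is to apply Theorem \ref{thm ctaqt} to realize $C$ as the quotient $B/K$ for the partition ideal $K$, and then to identify $K$ with $\Ann_B(\zS^- \oplus \zS^+)$ by establishing a double inclusion.

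For the first inclusion, I would recall from the proof of Theorem \ref{thm ctaqt} that the partition ideal $K$ coincides as a subspace of $B$ with the ``extension'' part $E = \Ext^2_C(DC, C)$ of the trivial extension decomposition $B = C \ltimes E$. Indeed, summing the constituent pieces of $K$, namely $\Ann\,\zS_L \cap \Ann\,\zS_R$, $L$, and $R$, recovers precisely the complement of $\End_\calh(T) = C$ inside $B = \bigoplus_{i\in\ZZ} \Hom_\cald(T, F^i T)$. Since every module in $\zS^- \cup \zS^+$ lies in the image of the standard embedding $\textup{mod}\,C \hookrightarrow \textup{mod}\,B$, the ideal $E$ acts as zero on such a module, and hence $K = E \subseteq \Ann_B(\zS^-\oplus\zS^+)$.

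For the reverse inclusion, I would use that for any $C$-module $M$ viewed inside $\textup{mod}\,B$ via the embedding, the decomposition $B = C \oplus E$ yields $\Ann_B(M) = \Ann_C(M) \oplus E$. Thus it suffices to show that $\zS^- \oplus \zS^+$ is a faithful $C$-module. For this I would argue that its support is all of $C$: if the indecomposable projective $P(x)$ lies in the postprojective component of $\textup{mod}\,C$, then since $\zS^-$ is a right section of that component it has every such projective among its predecessors, producing a nonzero morphism from $P(x)$ into some module of $\zS^-$ and placing $x$ in the support; dually, if $I(x)$ lies in the preinjective component, then $x$ lies in the support of $\zS^+$. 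For $C$ quasi-tilted of euclidean type, derived equivalent to a hereditary category of the form $\calt^- \vee \calc \vee \calt^+$, this dichotomy covers every vertex of $C$.

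The main obstacle is to verify rigorously the dichotomy above, together with the passage from sincerity to faithfulness. To handle this, I would exploit the structure of $C$ arising from the tilting object $T = T_- \oplus T_0 \oplus T_+$ in $\calh$: summands of $T_0$ produce projectives in the postprojective component and injectives in the preinjective component, while summands of $T_-$, respectively $T_+$, determine the remaining projectives and injectives at the edge of these components. Once the support of $\zS^- \cup \zS^+$ is shown to exhaust all vertices and the resulting faithfulness of $\zS^- \oplus \zS^+$ over $C$ is established, combining with the first inclusion yields $K = \Ann_B(\zS^- \oplus \zS^+)$, and hence $C = B/\Ann_B(\zS^- \oplus \zS^+)$ as required.
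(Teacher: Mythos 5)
Your overall strategy --- identifying $\Ann_B(\zS^-\oplus\zS^+)$ with the partition ideal $K$, which coincides with the extension part $E=\E$ of $B=C\ltimes E$ --- is the same identity the paper ultimately establishes, and your first inclusion ($K=E\subseteq\Ann_B(\zS^-\oplus\zS^+)$, since $E$ kills every module coming from $\textup{mod}\,C$) is correct. The gap is in the reverse inclusion. Having correctly reduced to showing that $\zS^-\oplus\zS^+$ is a \emph{faithful} $C$-module, you propose to do so by showing that its support exhausts $C_0$, that is, that the module is \emph{sincere}. But sincerity does not imply faithfulness: $\Ann_C(\zS^-\oplus\zS^+)=0$ is a statement about which elements of $C$, not merely which idempotents, act as zero, and a sincere module can have a nonzero annihilator. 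You flag this passage as ``the main obstacle,'' but the resolution you sketch is just the support computation restated, so the decisive step is never carried out. Note that neither $\zS^-$ nor $\zS^+$ is faithful over $C$ on its own (their $C$-annihilators are the nonzero bimodules $M^+$ and $M^-$ in the paper's notation), so faithfulness of the direct sum genuinely requires computing both annihilators and intersecting them; checking supports cannot see this.

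The paper closes exactly this gap by structural means: writing $T=T_-\oplus T_0\oplus T_+$, it observes that $C^-=\End_\calh(T_-\oplus T_0)$ and $C^+=\End_\calh(T_0\oplus T_+)$ are tilted, that $\zS^-$ and $\zS^+$ are complete slices of these algebras --- hence faithful over them, because a complete slice is a tilting module and not merely a sincere one --- and that $C$ is a branch extension of $C^-$ by $M^+$ and a branch coextension of $C^+$ by $M^-$, with the postprojective (respectively preinjective) component unchanged. This yields the exact equalities $\Ann_B\zS^-=M^+\oplus E$ and $\Ann_B\zS^+=M^-\oplus E$, whose intersection is $E$. To repair your argument, replace the sincerity claim by the faithfulness of the complete slices over $C^-$ and $C^+$ together with this decomposition of $C$; incidentally, this also supplies the justification for your ``dichotomy,'' which as stated is an unproved assertion (for these algebras many projectives and injectives lie in the inserted tubes, so one must argue via $C^\mp$ that the relevant ones do land in the postprojective and preinjective components).
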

 
\begin{proof}
 $C$ being quasi-tilted implies that there is a hereditary category $\calh$ with a tilting object $T$ such that $C=\End_\calh T$. Moreover, $B=\End_{\calc_\calh} T$ is the corresponding cluster-tilted algebra. As before we use the decomposition $T=T_-\oplus T_0\oplus T_+$. 
 Then the algebras
 \[C^- =\End_\calh (T_-\oplus T_0)\quad\textup{and}\quad C^+ =\End_\calh ( T_0\oplus T_+)\] 
 are tilted. 
 Let $\zS^-$ and $ \zS^+$ be complete slices in $\textup{mod}\,C^-$ and $\textup{mod}\,C^+$ respectively. Note that $\zS^-$ lies in the postprojective component and $\zS^+$ lies in the preinjective component of their respective module categories.
 
 Then $C$ is a branch extension of $C^-$ by the module 
 \[M^+=\Hom_\calh(T_+,T_+)\oplus \Hom_\calh(T_0,T_+).\] Similarly
  $C$ is a branch coextension of $C^+$ by the module 
  \[M^-=\Hom_\calh(T_-,T_-)\oplus \Hom_\calh(T_-,T_0).\]
  Observe that the postprojective component of $C^-$ does not change under the branch extension, and the preinjective component of $C^+$ does not change under the branch coextension. Therefore $\zS^-$ is a right section in the postprojective component of $C$ and   $\zS^+$ is a left section in the preinjective component.
  Moreover, by construction, we have
\[\Ann_B\zS^- =M^+\oplus \E \quad\textup{and}\quad \Ann_B\zS^+ =M^-\oplus \E,\]
and therefore
\[\Ann_B(\zS^- \oplus \zS^+)=\Ann_B\zS^- \cap\Ann_B\zS^+= \E .
\]
This completes the proof.
\end{proof}

The main theorem of this section is the following.
 
\begin{thm}
 \label{thm ctaqt3} 
 Let $C$ be a quasi-tilted algebra whose relation-extension $B$ is cluster-tilted of euclidean type. Then $C$ is one of the following.
 \begin{itemize}
\item [\textup{(a)}]
$C=B/\Ann\,\zS$ for some local slice $\zS$ in $\zG(\textup{mod}\,B)$.
\item [\textup{(b)}]
$C=B/K$ for some partition ideal $K$.
\end{itemize}

\end{thm}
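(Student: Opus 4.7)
The plan is to distinguish two cases according to whether the quasi-tilted algebra $C$ is in fact tilted or not.

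In the tilted case, the characterization recalled from \cite{ABS2} in the preliminaries immediately provides a local slice $\zS$ in $\zG(\textup{mod}\,B)$ with $C=B/\Ann_B\zS$, placing us in case (a). Nothing further is required here.

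When $C$ is quasi-tilted but not tilted, I would combine Theorem~\ref{thm ctaqt2} with a structural description of the underlying hereditary category. Theorem~\ref{thm ctaqt2} already furnishes the presentation $C=B/\Ann_B(\zS^-\oplus\zS^+)$, and its proof identifies this annihilator with the bimodule $\E$, i.e.\ with the kernel of the canonical projection $B=\CE\to C$. On the other hand, since $C=\End_\calh T$ for some hereditary category $\calh$ with tilting object $T$, and since $C$ is not tilted, $\calh$ cannot be equivalent to $\textup{mod}\,A$ for any hereditary algebra $A$. Hence, by the Lenzing--Skowro\'nski decomposition invoked earlier in the paper, $\calh=\calt^-\vee\calc\vee\calt^+$ with tubular parts $\calt^\pm$ each containing summands of $T$; accordingly $T=T_-\oplus T_0\oplus T_+$ with both $T_-$ and $T_+$ nonzero.

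This decomposition determines a partition ideal $K\subset B$ in the sense of this section. Applying Theorem~\ref{thm ctaqt} to this $K$ yields $B/K=\End_\calh T = C$, so $K$ is also the kernel of the projection $B\to C$; therefore $K=\Ann_B(\zS^-\oplus\zS^+)$ and $C=B/K$, placing us in case (b). The main obstacle I anticipate is verifying that the abstract partition of $\calh$ (and its associated partition ideal $K$) genuinely matches the concrete annihilator $\Ann_B(\zS^-\oplus\zS^+)$ appearing in Theorem~\ref{thm ctaqt2}. The key observation defusing this obstacle is that both ideals are identified as the same kernel of the canonical surjection $B\to C$, so the block decomposition of $B$ read off from the derived category $\DA$ (as used in the proof of Theorem~\ref{thm ctaqt}) forces their equality term by term.
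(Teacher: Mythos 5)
Your proposal is correct and follows essentially the same route as the paper: split on whether $C$ is tilted, invoke \cite{ABS2} for the tilted case, and in the non-tilted case use the Lenzing--Skowro\'nski decomposition $\calh=\calt^-\vee\calc\vee\calt^+$ together with Theorem~\ref{thm ctaqt} to realize $C=B/K$ for the induced partition ideal. The detour through Theorem~\ref{thm ctaqt2} and the identification of $K$ with $\Ann_B(\zS^-\oplus\zS^+)$ is harmless but unnecessary, since Theorem~\ref{thm ctaqt} already gives $B/K=\End_\calh T=C$ directly.
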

\begin{proof}
 Assume first that $C$ is tilted. Then, because of \cite{ABS2}, there exists a local slice $\zS$ in the transjective component of $\zG(\textup{mod}\,B)$ such that $B/\Ann\,\zS=C$. Otherwise, assume that $C$ is quasi-tilted but not tilted. Then, because of \cite{LenzingSkowronski}, there exists a hereditary category $\calh$ of the form \
 \[
\calh = \calt^-\vee\calc\vee\calt^+\]
and a tilting object $T$ in $\calh$ such that $C=\End_\calh T$. Because of Theorem~\ref{thm ctaqt} we get $C=B/K$ where $K$ is the partition ideal induced by the given partition of $\calh$.
\end{proof}

\begin{example}
 Let $B$ be the cluster-tilted algebra of type $\widetilde{\mathbb{E}}_7$ given by the quiver
 \[\xymatrix@C50pt{8\ar[dr] && 7\ar[ll]_\ze \\
 &6\ar[ru]\ar[rdd]^(0.33){{\zb_3}} \\
 &5\ar[rd]^(0.4){\zb_2\quad}\\
 1\ar[ruu]^(0.65){ \za_3}\ar[ru]^(0.6){ \za_2}\ar[rd]_(0.6){\za_1} && 2\ar@<1.5pt>[ll]\ar@<-1.5pt>[ll]\\
 &3\ar[ru]_(0.4){\zb_1}\ar[rd] \\ &&4
 }\]
 As usual let $T_i$ denote the indecomposable summand of $T$ corresponding to the vertex $i$ of the quiver. In this example $T$ has two transjective summands $T_1,T_2$, and the other summands lie in three different tubes.  $T_3, T_4$ lie in a tube $\calt_1$, $T_5 $ lies in a tube $\calt_2$ and $T_6,T_7,T_8$ lie in a tube $\calt_3$. 

Choosing a partition ideal corresponds to choosing a subset of tubes to be predecessors of the transjective component. Thus 
there are 8 different partition ideals corresponding to the 8 subsets of $\{\calt_1,\calt_2,\calt_3\}$.   
If the tube $\calt_i$ is chosen to be a predecessor of the transjective component, then the arrow $\zb_i$ is in the partition ideal. And if $\calt_i $ is not chosen to be a predecessor of the transjective component, then it is a successor and consequently the arrow $\za_i$ is in the partition ideal. The arrow $\ze $ is always in the partition ideal since it corresponds to a morphim from $T_8$ to $FT_7$ in the derived category.

Sumarizing, the 8 partition ideals $K$ are the ideals generated by the following sets of arrows.
\[ \{\za_{i}, \zb_j,\ze\mid   i\notin I , j\in I, I \subset\{1,2,3\}\}.\]

The quiver of the corresponding quasi-tilted algebra $B/ K$ is obtained by removing the generating arrows from the quiver of $B$. Exactly 2 of these 8 algebras are tilted, and these correspond to cutting $\za_1,\za_2,\za_3,\ze$, respectively 
$\zb_1,\zb_2,\zb_3,\ze$. 
\end{example}



\end{document}